\documentclass[b5paper]{article}
[12pt]

\makeatletter
\renewcommand*\l@section{\@dottedtocline{1}{1.5em}{2.3em}}
\makeatother

\usepackage{amsfonts}
\usepackage{amssymb}
\usepackage[T1]{fontenc}

\usepackage{tikz}
\usetikzlibrary{calc}

\usepackage{CJK}
\usepackage{amsmath}

\usepackage{amsfonts}
\usepackage{amssymb}
\usepackage{amsthm}
\usepackage{amssymb}
\usepackage{enumerate}
\usepackage[calc]{picture}
\usepackage[all,cmtip]{xy}

\usepackage[mathscr]{eucal}
\usepackage{eqlist}

\usepackage{color}
\usepackage{abstract}
\usepackage[T1]{fontenc}

\setlength{\abovecaptionskip}{0pt}
\setlength{\belowcaptionskip}{0pt}

 \usepackage[top=1.5 cm, bottom=1.8cm, left=2.8cm, right= 2.8cm]{geometry}

\usepackage{cite}

\theoremstyle{plain}
\newtheorem{theorem}{Theorem}[section]
\newtheorem{proposition}[theorem]{Proposition}
\newtheorem{lemma}[theorem]{Lemma}
\newtheorem{corollary}[theorem]{Corollary}

\theoremstyle{definition}
\newtheorem{definition}{Definition}[section]

\theoremstyle{example}
\newtheorem{example}{Example}[section]

 \theoremstyle{remark}

\usepackage{etoolbox}

\numberwithin{equation}{section}
\numberwithin{theorem}{section}

\begin{document}

\title{ {\Large  Generations       of  random hypergraphs  and   random  simplicial  complexes  by  the  map  algebra}}
\author{Shiquan Ren }

\date{}

\maketitle

\begin{abstract}
We  consider  the  random  hypergraph  on  a  finite vertex  set  by  choosing each  set of  vertices  as  an   hyperedge  independently  at  random.   We  express  the  probability  distributions  of  
 the  (lower-)associated  simplicial  complex    and  the  (lower-)associated  independence  hypergraph   of   
the  random  hypergraph    in  terms  of  the   probability  distributions  of  certain  random  simplicial complex   and  certain  random  independence hypergraph   of  Erd\"os-R\'enyi  type.    We  construct  a  graded  structure   of  the  map  algebra   explicitly  and   give    algorithms  to  generate  random  hypergraphs  and  random  simplicial  complexes.  
\end{abstract}

{ {\bf 2010 Mathematics Subject Classification.}  	Primary 05C80, 05E45, Secondary 55U10, 68P05 }

 {{\bf Keywords and Phrases.}      hypergraph,      simplicial complex,    randomness,  probability      }

\section{Introduction}

Let  $V$  be  a  finite  vertex  set.   Let  $\Delta[V]$  be  the collection  of all  the  nonempty  subsets  of  $V$.  
  Let  $p:  \Delta[V]\longrightarrow  [0,1]$  be  a   function  on  $\Delta[V]$  with values  in the unit  interval.   A   hypergraph   on  $V$  is  a  subset of  $\Delta[V]$.  
A  hypergraph  is  an  (abstract)   simplicial complex   if     any   nonempty  subset    of   any hyperedge   is  still a  hyperedge  in  the hypergraph.  
For  any  simplicial complex  $\mathcal{K}$  on  $V$,  an  external face of  $\mathcal{K}$  is  a  nonempty  subset $\sigma$  of $V$  such that $\sigma\notin \mathcal{K}$  and  $\tau\in \mathcal{K}$  for any proper  subset  
$\tau$  of  $\sigma$.   Let  $E(\mathcal{K})$  be  the set  of  all the  external faces of $\mathcal{K}$.    We  call a  hypergraph  an  independence  hypergraph   if     any    superset,   which  is  a  subset  of   $V$,       of   any hyperedge   is  still a  hyperedge  in  the hypergraph.   The  complement  of   a  simplicial  complex  in  $\Delta[V]$  is  an  independence  hypergraph  and  vice versa.    For  any  independence  hypergraph   $\mathcal{L}$  on  $V$,  a   co-external face of  $\mathcal{L}$  is  a     subset $\sigma$  of $V$  such that $\sigma\notin \mathcal{L}$  and  $\tau\in \mathcal{L}$  for any proper  superset   
$\tau$  of  $\sigma$  such  that  $\tau$ a  subset of  $V$.   Let  $\bar  E(\mathcal{L})$  be  the set  of  all the  co-external faces of $\mathcal{L}$.

Let   $\mathcal{H}$   be   a  hypergraph  on  $V$.  The  associated  simplicial complex $\Delta\mathcal{H}$  of  $\mathcal{H}$
is the smallest  simplicial  complex    containing  $\mathcal{H}$  (cf.   \cite{bfchen, parks, h1}).
 The  lower-associated  simplicial  complex  $\delta\mathcal{H}$  of  $\mathcal{H}$  is  the  largest  simplicial  complex    contained in  $\mathcal{H}$  (cf.  \cite{ph}).   We  define  the  associated  independence   hypergraph   $\bar\Delta\mathcal{H}$  of  $\mathcal{H}$  
as the smallest   independence  hypergraph   containing  $\mathcal{H}$   and  define     
 the  lower-associated  independence  hypergraph   $\bar\delta\mathcal{H}$  of  $\mathcal{H}$  as  the  largest  independence  hypergraph   contained in  $\mathcal{H}$.    Consider  the  random  hypergraph  whose  probability   is  given by 
  \begin{eqnarray*}
  \bar{\rm P}_{p}(\mathcal{H})=\prod_{\sigma\in\mathcal{H}}  p(\sigma)\prod_{\sigma\notin   \mathcal{H}}\big(1-p(\sigma)\big),
  \end{eqnarray*}
  where   $ \mathcal{H} $   is  any  hypergraph  on  $V$,      
  the       random  simplicial  complex  whose  probability   is  given by 
  \begin{eqnarray*}
  {\rm P}_{p}(\mathcal{K})=\prod_{\sigma\in\mathcal{K}}  p(\sigma)\prod_{\sigma\in  E(\mathcal{K})}\big(1-p(\sigma)\big),
  \end{eqnarray*}
  where   $ \mathcal{K} $   is  any  simplicial  complex  on  $V$,       
 and  the      random  independence  hypergraph  whose  probability   is  given by 
    \begin{eqnarray*}
{\rm Q}_{p}(\mathcal{L})=\prod_{\sigma\in\mathcal{L}}  p(\sigma)\prod_{\sigma\in  \bar  E(\mathcal{L})}\big(1-p(\sigma)\big),
  \end{eqnarray*}
  where   $ \mathcal{L} $   is  any  independence  hypergraph  on  $V$.   Note that  $\bar{\rm P}_{p}$,  
  $ {\rm P}_{p}$  and  ${\rm Q}_{p}$  are  well-defined  probability  functions,  i.e. 
  \begin{eqnarray*}
  \sum_{ \mathcal{H} }  \bar{\rm P}_{p}(\mathcal{H})=
    \sum_{ \mathcal{K} }  {\rm P}_{p}(\mathcal{K})=
      \sum_{ \mathcal{L} }   {\rm Q}_{p}(\mathcal{L})=1.  
  \end{eqnarray*}
  The  summations  are      over  all  the  hypergraphs  on  $V$,  all  the  simplicial  complexes  on  $V$  and  
  all  the  independence  hypergraphs  on  $V$  respectively.  

\begin{theorem}\label{th-mmmmm11111}
Let  $\mathcal{H}\sim  \bar{\rm  P}_p$   be  a  randomly  generated  hypergraph  on  $V$.  
Then  
\begin{enumerate}[(1).]
\item
the  complement   of   $\mathcal{H}$  is  a randomly  generated  hypergraph  $\gamma\mathcal{H}\sim  \bar{\rm  P}_{1-p}$,  
\item
the  complement  of  the  associated  simplicial  complex  of   $\mathcal{H}$   is  a  randomly  generated  
independence  hypergraph  $\gamma \Delta\mathcal{H} \sim  {\rm  Q}_{1-p}$,  

\item
the  complement  of  the  associated  independence  hypergraph   of   $\mathcal{H}$   is a randomly generated  
simplicial  complex    $\gamma \bar\Delta\mathcal{H} \sim  {\rm   P}_{1-p}$,  

\item
the  lower-associated  simplicial  complex   of   $\mathcal{H}$   is   a  randomly generated  
simplicial  complex   $\delta\mathcal{H} \sim  {\rm  P}_{p}$, 
\item
the  lower-associated  independence  hypergraph   of   $\mathcal{H}$     is   a   randomly generated  
 independence  hypergraph   $ \bar\delta\mathcal{H}\sim  {\rm  Q}_{p}$. 
\end{enumerate}
\end{theorem}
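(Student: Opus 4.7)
The plan is to prove (1), (4), (5) directly from the definition of $\bar{\rm P}_p$, and then to deduce (2), (3) by a complementation duality that exchanges the roles of simplicial complexes and independence hypergraphs.

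Part (1) is immediate: $\sigma\in\gamma\mathcal{H}$ iff $\sigma\notin\mathcal{H}$, so the indicators $\mathbf 1_{\sigma\in\gamma\mathcal{H}}$ are independent with success probability $1-p(\sigma)$, and the formula for $\bar{\rm P}_{1-p}(\gamma\mathcal{H})$ follows by swapping $p$ with $1-p$ in the product.

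For (4) I would fix a simplicial complex $\mathcal{K}$ on $V$ and describe the event $\{\delta\mathcal{H}=\mathcal{K}\}$ as an event on the independent indicators $\{\mathbf 1_{\sigma\in\mathcal{H}}\}_{\sigma\in\Delta[V]}$. Since $\delta\mathcal{H}=\{\sigma:\tau\in\mathcal{H}\text{ for every nonempty }\tau\subseteq\sigma\}$ and $\mathcal{K}$ is closed under taking subsets, the inclusion $\mathcal{K}\subseteq\delta\mathcal{H}$ is equivalent to $\sigma\in\mathcal{H}$ for every $\sigma\in\mathcal{K}$. For the reverse inclusion $\delta\mathcal{H}\subseteq\mathcal{K}$, it suffices to require $\sigma\notin\mathcal{H}$ for every $\sigma\in E(\mathcal{K})$: any subset $\sigma'\subseteq\sigma$ lying outside $\mathcal{K}$ contains a minimal non-face, which by definition is an external face and hence absent from $\mathcal{H}$. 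Partitioning $\Delta[V]=\mathcal{K}\sqcup E(\mathcal{K})\sqcup R$ with $R$ the remaining faces, and using independence, I would conclude
\begin{align*}
\Pr(\delta\mathcal{H}=\mathcal{K})
&=\prod_{\sigma\in\mathcal{K}}p(\sigma)\prod_{\sigma\in E(\mathcal{K})}\big(1-p(\sigma)\big)\prod_{\sigma\in R}\big(p(\sigma)+(1-p(\sigma))\big)\\
&={\rm P}_p(\mathcal{K}).
\end{align*}
Part (5) is dual, with subsets replaced by supersets and $E(\mathcal{K})$ replaced by $\bar E(\mathcal{L})$ for an independence hypergraph $\mathcal{L}$.

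Given (4) and (5), parts (2) and (3) follow from the pointwise identities
\[
\gamma\Delta\mathcal{H}=\bar\delta(\gamma\mathcal{H}),\qquad \gamma\bar\Delta\mathcal{H}=\delta(\gamma\mathcal{H}),
\]
which I would verify directly: $\sigma\in\gamma\Delta\mathcal{H}$ iff no superset of $\sigma$ in $V$ lies in $\mathcal{H}$ iff every such superset lies in $\gamma\mathcal{H}$ iff $\sigma\in\bar\delta(\gamma\mathcal{H})$, and symmetrically for the other identity. Combining with (1), which gives $\gamma\mathcal{H}\sim\bar{\rm P}_{1-p}$, and then applying (5) or (4) with $p$ replaced by $1-p$ yields $\gamma\Delta\mathcal{H}\sim{\rm Q}_{1-p}$ and $\gamma\bar\Delta\mathcal{H}\sim{\rm P}_{1-p}$.

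I expect the main obstacle to be the marginalization step in (4) and (5): namely, isolating the set $R$ of faces on which $\{\delta\mathcal{H}=\mathcal{K}\}$ imposes no constraint, so that the corresponding factors sum to one. This rests on the combinatorial observation that every non-face contains a minimal non-face; once that is in hand, the probability identities are a matter of bookkeeping against the independence of the underlying Bernoulli trials.
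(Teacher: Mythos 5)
Your argument is correct and reaches the same formulas, but it is organized as the mirror image of the paper's proof. The paper's primitive computations are for the upper closures: Lemma~\ref{le-er1} and Lemma~\ref{le-er2} enumerate the fibres $\{\mathcal{H}:\Delta\mathcal{H}=\mathcal{K}\}$ and $\{\mathcal{H}:\bar\Delta\mathcal{H}=\mathcal{L}\}$ (a hypergraph in the latter fibre is exactly $\min(\mathcal{L})\sqcup S$ with $S\subseteq\mathcal{L}\setminus\min(\mathcal{L})$ arbitrary) and telescope the resulting sum; parts (4) and (5) are then deduced by conjugation, $\delta=\gamma\bar\Delta\gamma$ and $\bar\delta=\gamma\Delta\gamma$ (Lemmas~\ref{le-jdngt} and \ref{co-i3ro}), and parts (2) and (3) by one further application of $D\gamma$ (Corollary~\ref{co-1y3058}). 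You instead take the lower closures as primitive, writing $\{\delta\mathcal{H}=\mathcal{K}\}$ as the intersection of the independent events $\{\mathcal{K}\subseteq\mathcal{H}\}$ and $\{E(\mathcal{K})\cap\mathcal{H}=\emptyset\}$, and then recover (2) and (3) from $\gamma\Delta=\bar\delta\gamma$ and $\gamma\bar\Delta=\delta\gamma$, which are restatements of Lemma~\ref{le-882890}. The essential content is identical --- marginalizing the unconstrained hyperedges to a factor of $1$, and the fact that every non-face of $\mathcal{K}$ contains an external face, i.e.\ $E(\mathcal{K})=\min(\gamma\mathcal{K})$ --- but your route is a little shorter, replacing the sum-over-fibres and the triple-sum conjugations by a single event identity per operator. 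One small point to make explicit: for the product formula you need your two conditions to be not merely sufficient for $\delta\mathcal{H}=\mathcal{K}$ but equivalent to it; necessity of $E(\mathcal{K})\cap\mathcal{H}=\emptyset$ holds because $\sigma\in E(\mathcal{K})\cap\mathcal{H}$ together with $\mathcal{K}\subseteq\mathcal{H}$ gives $\Delta\sigma\subseteq\mathcal{H}$, hence $\sigma\in\delta\mathcal{H}\setminus\mathcal{K}$, a contradiction.
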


We  will  prove  Theorem~\ref{th-mmmmm11111}  in   Section~\ref{s3333333a}.   In   fact,   Theorem~\ref{th-mmmmm11111}    (1)  follows  from  Lemma~\ref{cplmt},   Theorem~\ref{th-mmmmm11111}   (2),  (3)  follow  from  
 Corollary~\ref{co-1y3058}~(1),  (2)  respectively,   and    Theorem~\ref{th-mmmmm11111}   (4),   (5)   follow  from  
 Theorem~\ref{le-3.3aa}~(3),  (4)  respectively.   As  by-products,  we  have  the following  two corollaries.

 \begin{corollary}\label{co-23ho2hr}
 Let  $V'$  and  $V''$   be   two  disjoint  vertex  sets.   Let  $p':  \Delta[V']\longrightarrow  [0,1]$  and   let  $p'':  \Delta[V'']\longrightarrow  [0,1]$.    Define  $p'*p'':  \Delta[V'\sqcup  V'']\longrightarrow  [0,1]$  by  letting  $(p'*p'')(\sigma)=p'(\sigma\cap V') p''(\sigma\cap V'')$  for any $\sigma\in \Delta[V'\sqcup V'']$.  
 \begin{enumerate}[(1).]
 \item
 Let  $\mathcal{H}'\sim  \bar{\rm  P}_{p'} $  and    $\mathcal{H}''\sim  \bar{\rm  P}_{p''} $  be  randomly  generated  hypergraphs.    Then their  join  is  a  randomly  generated  hypergraph  $\mathcal{H}'*\mathcal{H}''\sim  \bar{\rm  P}_{p'*p''}$;   
  \item
   Let  $\mathcal{K}'\sim   {\rm  P}_{p'} $  and    $\mathcal{K}''\sim  {\rm  P}_{p''} $  be  randomly  generated   simplicial  complexes.    Then their  join  is  a  randomly  generated  simplicial  complex  $\mathcal{K}'*\mathcal{K}''\sim   {\rm  P}_{p'*p''}$; 
  \item
   Let  $\mathcal{L}'\sim   {\rm  Q}_{p'} $  and    $\mathcal{L}''\sim  {\rm  Q}_{p''} $  be  randomly  generated    independence  hypergraphs.    Then their  join  is  a  randomly  generated  independence  hypergraph    $\mathcal{L}'*\mathcal{L}''\sim   {\rm   Q}_{p'*p''}$.  
 \end{enumerate}
 \end{corollary}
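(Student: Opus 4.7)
The plan is to prove each of the three parts by a direct factorization of the relevant probability expression, exploiting both the product structure $(p'*p'')(\sigma)=p'(\sigma\cap V')\,p''(\sigma\cap V'')$ (with the convention $p'(\emptyset)=p''(\emptyset)=1$) and the combinatorial description of the join. The starting point is the definition of $\bar{\rm P}_{p'*p''}$ (respectively ${\rm P}_{p'*p''}$, ${\rm Q}_{p'*p''}$) evaluated at $\mathcal{H}'*\mathcal{H}''$ (resp.\ $\mathcal{K}'*\mathcal{K}''$, $\mathcal{L}'*\mathcal{L}''$). I would partition the indexing set of nonempty subsets of $V'\sqcup V''$ into three classes: those contained in $V'$ only, those contained in $V''$ only, and the ``cross'' subsets that meet both. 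On the first two classes $(p'*p'')$ restricts to $p'$ and $p''$ respectively, so once I record the join-membership criterion for each class, the contributions from these classes reassemble into $\bar{\rm P}_{p'}(\mathcal{H}')\,\bar{\rm P}_{p''}(\mathcal{H}'')$ (and the analogous factorization for the other two settings), which handles part (1).

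For parts (2) and (3) the essential preparatory step is a structural description of how external and co-external faces behave under the join. I would first establish the decompositions
\begin{equation*}
E(\mathcal{K}'*\mathcal{K}'')=E(\mathcal{K}')\sqcup E(\mathcal{K}'')
\qquad\text{and}\qquad
\bar E(\mathcal{L}'*\mathcal{L}'')=\bar E(\mathcal{L}')\sqcup \bar E(\mathcal{L}'').
\end{equation*}
Each of these follows from the observation that a cross subset $\sigma$ always satisfies $\sigma\cap V'\subsetneq\sigma$ and $\sigma\cap V''\subsetneq\sigma$, so the join-membership criterion forces $\sigma$ itself into the join whenever its proper subsets (or supersets, in the independence-hypergraph case) already lie there; hence no (co-)external face of the join can be a cross subset, and any such face must be a (co-)external face of one of the two factors. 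Once these decompositions are available, the three-class bookkeeping used in part (1) extends routinely to (2) and (3).

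An alternative, more conceptual route is to derive (2) and (3) from (1) by invoking Theorem~\ref{th-mmmmm11111}(4) and (5). Once $\mathcal{H}=\mathcal{H}'*\mathcal{H}''\sim\bar{\rm P}_{p'*p''}$ is known from (1), I would prove the commutation identities $\delta(\mathcal{H}'*\mathcal{H}'')=\delta\mathcal{H}'*\delta\mathcal{H}''$ and $\bar\delta(\mathcal{H}'*\mathcal{H}'')=\bar\delta\mathcal{H}'*\bar\delta\mathcal{H}''$, then apply Theorem~\ref{th-mmmmm11111}(4) (resp.\ (5)) to both sides and compare. The main obstacle I expect is the bookkeeping for the cross subsets in the direct proof of (1): their membership in $\mathcal{H}'*\mathcal{H}''$ couples the two factors nontrivially, so aligning that coupling with the independence built into $\bar{\rm P}_{p'*p''}$ is the delicate step, and it is exactly here that the explicit convention $p'(\emptyset)=p''(\emptyset)=1$ and the join-membership criterion have to be used together carefully to recover the required product factorization.
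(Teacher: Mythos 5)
Your second, ``more conceptual'' route is exactly the paper's own argument: part (1) is Lemma~\ref{pr-88.a8}, and parts (2)--(3) are obtained by combining the commutation identities $(D\ast)(D\delta,D\delta)=(D\delta)(D\ast)$ and $(D\ast)(D\bar\delta,D\bar\delta)=(D\bar\delta)(D\ast)$ of Lemma~\ref{le-3.aaa8} with Theorem~\ref{le-3.3aa}~(3),(4), as in Corollary~\ref{co-1l3n5rdvg}. So that half of your proposal adds nothing beyond the paper, and it inherits whatever part (1) rests on.

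The direct route breaks down at exactly the step you flag as delicate, and it is worth seeing that the cross-subset bookkeeping cannot be made to close. In the join, a cross hyperedge $\sigma$ lies in $\mathcal{H}'*\mathcal{H}''$ if and only if both traces $\sigma\cap V'$ and $\sigma\cap V''$ do; the events $\{\sigma\in\mathcal{H}'*\mathcal{H}''\}$ therefore have the marginal probabilities $p'(\sigma\cap V')p''(\sigma\cap V'')$ but are strongly dependent across different $\sigma$, whereas under $\bar{\rm P}_{p'*p''}$ these events are independent by construction. Concretely, take $V'=\{a\}$, $V''=\{b\}$, $x=p'(\{a\})$, $y=p''(\{b\})$ with $0<x,y<1$: the hypergraph $\{\{a\},\{b\}\}$ is not the join of any pair, so the law of $\mathcal{H}'*\mathcal{H}''$ gives it probability $0$, while $\bar{\rm P}_{p'*p''}(\{\{a\},\{b\}\})=xy(1-xy)>0$; dually, the join produces $\Delta[\{a,b\}]$ with probability $xy$ while $\bar{\rm P}_{p'*p''}$ assigns it $x^2y^2$. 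The same numbers occur for ${\rm P}_{p'*p''}$ in part (2): your decomposition $E(\mathcal{K}'*\mathcal{K}'')=E(\mathcal{K}')\sqcup E(\mathcal{K}'')$ is correct, but the product $\prod_{\sigma\in\mathcal{K}'*\mathcal{K}''}(p'*p'')(\sigma)$ acquires the extra factor $\prod_{\tau'\in\mathcal{K}',\,\tau''\in\mathcal{K}''}p'(\tau')p''(\tau'')$ from the cross simplices, so ${\rm P}_{p'*p''}(\mathcal{K}'*\mathcal{K}'')\neq{\rm P}_{p'}(\mathcal{K}'){\rm P}_{p''}(\mathcal{K}'')$ in general. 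No convention on $p(\emptyset)$ repairs this. Note that the paper's proof of Lemma~\ref{pr-88.a8} verifies only the one-dimensional marginals and not joint independence, so your direct computation, pushed through honestly, exposes a genuine obstruction to the stated factorization rather than a defect in your own bookkeeping; a correct statement would have to replace $\bar{\rm P}_{p'*p''}$ (resp.\ ${\rm P}_{p'*p''}$, ${\rm Q}_{p'*p''}$) by the push-forward of the product law under $\ast$, or restrict attention to the hyperedges contained in $V'$ or in $V''$ alone.
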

 
 Corollary~\ref{co-23ho2hr}~(1)  follows  from     Lemma~\ref{pr-88.a8}   and     Corollary~\ref{co-23ho2hr}~(2),   
 (3)   follow   from  Corollary~\ref{co-1l3n5rdvg}~(1),   (2)   respectively.
 
 \begin{corollary}\label{co-2xt.log}
 Let  $p',  p'':  \Delta[V]\longrightarrow  [0,1]$.   Define  
 $p'\cap  p'',   p'\cup  p'':  \Delta[V]\longrightarrow  [0,1]$  by  letting  
   $(p'\cap p'')(\sigma)= p'(\sigma) p''(\sigma)$  and   $(p'\cup p'')(\sigma)= 1-(1-p'(\sigma))(1- p''(\sigma))$  for any $\sigma\in\Delta[V]$.
   \begin{enumerate}[(1).]
 \item
 Let  $\mathcal{H}'\sim  \bar{\rm  P}_{p'} $  and   let  $\mathcal{H}''\sim  \bar{\rm  P}_{p''} $.    Then their  intersection  is  a  randomly  generated  hypergraph  $\mathcal{H}'\cap \mathcal{H}''\sim  \bar{\rm  P}_{p'\cap  p''}$   and  their   union   is  a  randomly  generated  hypergraph  $\mathcal{H}'\cup \mathcal{H}''\sim  \bar{\rm  P}_{p'\cup  p''}$;   
  \item
   Let  $\mathcal{K}'\sim   {\rm  P}_{p'} $  and  let  $\mathcal{K}''\sim  {\rm  P}_{p''} $.    Then their  intersection   is  a  randomly  generated  simplicial  complex  $\mathcal{K}'\cap \mathcal{K}''\sim   {\rm  P}_{p'\cap   p''}$; 
  \item
   Let  $\mathcal{L}'\sim   {\rm  Q}_{p'} $  and   let  $\mathcal{L}''\sim  {\rm  Q}_{p''} $.    Then their  intersection  is  a  randomly  generated  independence  hypergraph    $\mathcal{L}'\cap\mathcal{L}''\sim   {\rm   Q}_{p'\cap   p''}$.  
 \end{enumerate}
 \end{corollary}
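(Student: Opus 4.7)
The plan is to establish (1) directly from independence of hyperedge inclusions, then to reduce (2) and (3) to (1) by pushing forward along the deterministic maps $\delta$ and $\bar\delta$ via Theorem~\ref{th-mmmmm11111}.

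For (1), assume $\mathcal{H}'$ and $\mathcal{H}''$ are drawn independently. Then the family $\{1_{\sigma\in\mathcal{H}'},1_{\sigma\in\mathcal{H}''}\}_{\sigma\in\Delta[V]}$ consists of mutually independent Bernoulli variables of means $p'(\sigma)$ and $p''(\sigma)$. The elementary pointwise identities
\[
1_{\sigma\in\mathcal{H}'\cap\mathcal{H}''}=1_{\sigma\in\mathcal{H}'}\cdot 1_{\sigma\in\mathcal{H}''},\qquad 1_{\sigma\notin\mathcal{H}'\cup\mathcal{H}''}=(1-1_{\sigma\in\mathcal{H}'})(1-1_{\sigma\in\mathcal{H}''})
\]
then show that $\{1_{\sigma\in\mathcal{H}'\cap\mathcal{H}''}\}_\sigma$ and $\{1_{\sigma\in\mathcal{H}'\cup\mathcal{H}''}\}_\sigma$ are again mutually independent Bernoulli families, with parameters $(p'\cap p'')(\sigma)$ and $(p'\cup p'')(\sigma)$ respectively, so substitution into the definition of $\bar{\rm P}$ yields (1).

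For (2) and (3) the key observation is the commutation of $\delta$ and $\bar\delta$ with intersection,
\[
\delta(\mathcal{H}'\cap\mathcal{H}'')=\delta\mathcal{H}'\cap\delta\mathcal{H}'',\qquad \bar\delta(\mathcal{H}'\cap\mathcal{H}'')=\bar\delta\mathcal{H}'\cap\bar\delta\mathcal{H}'',
\]
which I would verify directly: $\sigma\in\delta\mathcal{H}$ iff every nonempty subset of $\sigma$ lies in $\mathcal{H}$, a conjunctive condition in $\mathcal{H}$ that distributes over intersection, and symmetrically for $\bar\delta$ with supersets. Now, given independent $\mathcal{H}'\sim\bar{\rm P}_{p'}$ and $\mathcal{H}''\sim\bar{\rm P}_{p''}$, Theorem~\ref{th-mmmmm11111}~(4) says that $(\delta\mathcal{H}',\delta\mathcal{H}'')$ is a coupling realising the hypothesis of (2) with independent marginals ${\rm P}_{p'}$ and ${\rm P}_{p''}$. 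Combining the commutation identity with (1) applied to $p'\cap p''$, and then invoking Theorem~\ref{th-mmmmm11111}~(4) a second time, produces
\[
\delta\mathcal{H}'\cap\delta\mathcal{H}''=\delta(\mathcal{H}'\cap\mathcal{H}'')\sim{\rm P}_{p'\cap p''},
\]
which is (2); part (3) is the entirely parallel argument using Theorem~\ref{th-mmmmm11111}~(5) in place of (4).

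The main subtle point, really the conceptual content of the corollary, is why the union version is absent from (2) and (3). The reason is that $\delta$ and $\bar\delta$ fail to distribute over unions: a face of $\mathcal{H}'\cup\mathcal{H}''$ can have its subset (respectively superset) closure split between $\mathcal{H}'$ and $\mathcal{H}''$, so the coupling trick collapses and $\mathcal{K}'\cup\mathcal{K}''$ is not of the Erd\H{o}s--R\'enyi form in general.
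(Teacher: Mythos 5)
Your proposal is correct and follows essentially the same route as the paper: part (1) is the content of the cited Lemma~\ref{le-3.1aa} (which you reprove directly via independence of the hyperedge indicators), and parts (2) and (3) rest on exactly the two facts the paper combines in Corollary~\ref{co-nwtllip}, namely that $\delta$ and $\bar\delta$ commute with intersection (observations (iii) and (v) of Subsection~\ref{sect2.1111111}, packaged as Lemma~\ref{le-3.aaa8}) and that $(D\delta)(\bar{\rm P}_p)={\rm P}_p$, $(D\bar\delta)(\bar{\rm P}_p)={\rm Q}_p$ (Theorem~\ref{le-3.3aa}). Your coupling phrasing is just the probabilistic restatement of the paper's pushforward identity $(D\cap)(D\delta,D\delta)=(D\delta)(D\cap)$, and your closing remark correctly identifies why the union case is omitted: $\delta(\mathcal{H}\cup\mathcal{H}')\supseteq\delta\mathcal{H}\cup\delta\mathcal{H}'$ is only an inclusion.
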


Corollary~\ref{co-2xt.log}~(1)  follows  from  \cite[Lemma~4.5]{jktr2}  (Lemma~\ref{le-3.1aa})   and   Corollary~\ref{co-2xt.log}~(2),   (3)    follow   from  Corollary~\ref{co-nwtllip}~(1),   (2)  respectively.

\smallskip

The  generation  of  random  hypergraphs  as  well  as  random  simplicial  complexes  is  useful  in  computer  science    (for  example,  \cite{naheed}).     The  map  algebra on  the  space  of   random  sub-hypergraphs   of   a  fixed  simplicial  complex  was  initially  studied  by  
C.  Wu,  J.  Wu   and  the  present author    \cite{jktr2}.  In Section~\ref{ss8ss88}  of  this paper,  we  consider  the  map  algebra  on   the  space  of   random  sub-hypergraphs  of  the  complete  complex   $\Delta[V]$.  By  considering  the  compositions  of   $\Delta$,  $\bar\Delta$,  
$\delta$,  $\bar\delta$,  $\gamma$,  the  intersections,  the  unions,  the  joins    and    the  products,   we   construct  a    graded   structure  of   the   map  algebra   in  (\ref{eq-4.jnuoh}).   We  consider  the  action  of  the  map  algebra  on  the  random hypergraph   $\mathcal{H}\sim  \bar{\rm  P}_p$,   the  random  simplicial complex   $\mathcal{K}\sim {\rm  P}_p$  and   the  random  independence  hypergraph   $\mathcal{L}\sim {\rm  Q}_p$   in  (\ref{eq-9823u592}).   Using    the  graded  structure  in    (\ref{eq-9823u592}),    we  give  some  algorithms  generating        random  hypergraphs,  random  simplicial  complexes  and  random  independence  hypergraphs.     Each  element  in  (\ref{eq-9823u592})   will  randomly  generate  a  hypergraph  and   each   element   of   certain  particular  forms   will  generate  a  random  simplicial  complex  or  a   random  independence  hypergraph.

  \subsection{Literature  review  on  random  simplicial  complexes}
  
  Let $V$  be a    vertex set  of  cardinality  $N$.   
      Denote  $\Delta[V]$  as  $\Delta_N$.  
   Let  ${\rm  sk}_r(\Delta_N)$  be  the  $r$-skeleton  of  $\Delta_N$,    $0\leq  r\leq   N-1$.
  Consider the   space  $\Omega_N^r$  consisting  of  all sub-complexes  $\mathcal{K}$   of  ${\rm  sk}_r(\Delta_N)$.  Consider   the  probability  function
   ${\rm  P}_{r,N,\mathbf{p}}: \Omega_N^r\longrightarrow \mathbb{R}$  given by
   \begin{eqnarray*}
   {\rm  P}_{r,N,\mathbf{p}}(\mathcal{K})=  \prod_{\sigma\in \mathcal{K}}  p(\sigma) \prod_{\sigma\in  E( \mathcal{K})} \big(1-p(\sigma)\big),
   \end{eqnarray*}
where  $\mathbf{p}=(p_0,p_1,\ldots,p_r)$,   $0\leq p_0,p_1,\ldots,p_r\leq 1$  are  constants,  and  $p(\sigma)= p_i$  for any   $i$-simplex  $\sigma\in  {\rm  sk}_r(\Delta_N)$,   $0\leq i\leq  r$.  This    model
${\rm  P}_{r,N,\mathbf{p}}$    was  given  by  A.  Costa  and  M.  Farber  in
\cite{m88,m1,m4}.
The  connectivity,  the  fundamental  group,  the dimension,  and the  Betti number  of  the
random  simplicial complex with  the  probability   ${\rm  P}_{r,N,\mathbf{p}}$
have  been   studied  by  A.  Costa  and  M.  Farber  in  \cite{m88,m1,m2,m3,m4}.
In  particular,  let     $0\leq  p\leq 1$   be  a  constant.
 The followings are special cases  of  ${\rm  P}_{r,N,\mathbf{p}}$:

 (1).  
   ${\rm  P}_{1,N,\mathbf{p}}$  with  $\mathbf{p}=(1,p)$  is
   the  Erd\"os-R\'enyi model $G(N,p)$.   
P.  Erd\"os  and  A.  R\'enyi  \cite{1959er}   and   E.N.  Gilbert   \cite{1959g}
constructed  the  Erd\"os-R\'enyi  model   $G(N,p)$    by choosing  each pair of  vertices  in $V$
 as  an edge independently  at  random  with probability  $p$.
  Thresholds  for  the  connectivity  of
 $G(N,p)$  were  proved  by  P.  Erd\"os  and  A.  R\'enyi    in  \cite{1960er}.

   (2).   
     ${\rm  P}_{2,N,\mathbf{p}}$  with  $\mathbf{p}=(1,1,p)$
    is  the  Linial-Meshulam  model  $Y_2(N,p)$.   
  N.  Linial and  R.  Meshulam  \cite{y2-3} constructed a  random  $2$-complex  $Y_2(N,p)$
  by  taking the  complete graph on $V$  as  the   $1$-skeleton and then  choosing each  $2$-simplex
  independently  at  random  with  probability  $p$.    The  fundamental  group,  the  homology groups,
   and  the  asphericity  as well as the hyperbolicity  of  $Y_2(N,p)$  were respectively  studied  in  \cite{6},
    \cite{y2-4,y2-5},
   and \cite{y2-1,y2-2}.

   (3).  
    ${\rm  P}_{d,N,\mathbf{p}}$  with  $\mathbf{p}=(1,\ldots,1,p)$  
    is  the  Meshulam-Wallach  model  $Y_d(N,p)$.   
    R.  Meshulam  and    N.  Wallach   \cite{yd-5}   constructed a  random  $d$-complex  $Y_d(N,p)$
    by  taking the $(d-1)$-skeleton  of  the  complete  complex  on  $V$  and then  choosing  each
    $d$-simplex  independently  at  random  with  probability  $p$.
    The  (co)homology groups,   the phase transition of the homology groups,   the   eigenvalues of  the  Laplacian,  the  collapsibility   and  the  topological minor
    were  respectively  studied  in  \cite{8,9,yd-1,7},  \cite{annals},  \cite{yd-2},  \cite{yd-4,8}  and    \cite{yd-3}.

   (4).  
   ${\rm  P}_{N-1,N,\mathbf{p}}$  with  $\mathbf{p}=(1,p,1,\ldots,1)$   is  the  random flag complex  (random clique complex)  $X(N,p)$  of  $G(N,p)$.   
   This   random  simplicial  complex     was  studied     by  M. Kahle  in \cite{clique, kahleann}  and  A. Costa, M. Farber and D. Horak  in \cite{cfh}.   Sharp vanishing thresholds for the cohomology of  $X(N,p)$  were  proved  by  M. Kahle  in \cite{kahleann}.

   (5).   
   ${\rm  P}_{2,N,\mathbf{p}}$  with  $\mathbf{p}=(p_0,p_1,p_2)$  was  considered  by  M. Farber  and  T. Nowik  in  \cite{f2021}.    
The multi-parameter threshold for the property that every $2$-dimensional
simplicial complex   admits a topological embedding into the  ${\rm  P}_{2,N,\mathbf{p}}$-generated random   $2$-complex asymptotically almost surely was  established.

 \section{Simplicial models  for   hypergraphs}\label{sect2}

\subsection{Hypergraphs   and   simplicial complexes}\label{sect2.1111111}

Let  $V$  be a  finite set with a total order $\prec$.  The elements of $V$ are called {\it vertices}.  Let $2^V$  be  the  power set  of  $V$.
Let  $\Delta[V]=2^V\setminus\{\emptyset\}$.
For any nonnegative  integer $n$  and any distinct vertices $v_0,v_1,\ldots, v_n\in V$    such  that  $v_0\prec v_1\prec\cdots \prec  v_n$,   we  call the set  $\sigma 
 =\{v_0,v_1,\ldots, v_n\}$  
   an {\it $n$-hyperedge} on $V$.
Let  $\sigma  
 $  be  an  $n$-hyperedge  on  $V$.
 The  {\it subset closure}  $\Delta\sigma$   of $\sigma$  is an $n$-dimensional simplicial complex whose     set  of   simplices consists of all the non-empty subsets of $\sigma$.
The  {\it  superset  closure}  $\bar\Delta\sigma$  of $\sigma$  is  $(\Delta[V]\setminus \Delta\sigma)\cup\sigma$,  in other words,  $\bar\Delta\sigma$  consists  of  all the supersets  $\tau\supseteq \sigma$  such that  $\tau\subseteq  V$.
 Note that
  $\Delta\sigma$ does not  depend on  the choice  of  $V$    while   $\bar\Delta \sigma$  depends  on  the choice  of  $V$.

\begin{definition}\cite{berge,parks,h1}
\label{d-1.2}
A  {\it hypergraph} $\mathcal{H}$  on $V$  is a collection  of  hyperedges on $V$.  In particular,
\begin{enumerate}[(1).]
\item
     $\mathcal{H}$   is an {\it (abstract)  simplicial complex}  if $\tau\in\mathcal{H}$  for any $\sigma\in\mathcal{H}$  and any nonempty subset $\tau\subseteq \sigma$.    A hyperedge  of a simplicial complex is  called a  {\it simplex}  (cf. \cite{hatcher,eat});
 \item
      $\mathcal{H}$   is called  an {\it  independence  hypergraph}  if $\tau\in\mathcal{H}$  for any $\sigma\in\mathcal{H}$  and any  superset $\tau\supseteq \sigma$ with $\tau\subseteq  V$.
 \end{enumerate}
 \end{definition}
 Let $\mathcal{H}$  be a hypergraph  on $V$.
 \begin{definition}\label{d-1.008}
 The  {\it  complement}  of  $\mathcal{H}$  is  a  hypergraph  on  $V$  given by
 \begin{eqnarray*}
 \gamma\mathcal{H}=\{\sigma\in\Delta[V]\mid  \sigma\notin \mathcal{H}\}.
 \end{eqnarray*}
 \end{definition}
 By  Definition~\ref{d-1.008},     $\gamma\mathcal{H}$   is  a  simplicial  complex
 (resp.   an   independence  hypergraph)   iff   $\mathcal{H}$    is   an   independence  hypergraph  (resp.   a simplicial  complex).      Note  that  $\gamma^2={\rm  id}$.  

 \begin{definition}
 \label{d-1.3}
The  {\it associated simplicial complex}  (cf. \cite{bfchen, parks, h1})   is
\begin{eqnarray*}\label{eq-2.1aaz}
\Delta\mathcal{H}=\bigcup_{\sigma\in \mathcal{H}} \Delta\sigma,
\end{eqnarray*}
which    is the  smallest  simplicial  complex  containing    $\mathcal{H}$.   
The  {\it lower-associated simplicial complex}  is   (cf.  \cite{ph})
\begin{eqnarray*}\label{eq-2.1aay}
\delta\mathcal{H}=\bigcup_{\Delta\sigma\subseteq\mathcal{H}} \{\sigma\},
\end{eqnarray*}
   which    is the  largest   simplicial  complex  contained in    $\mathcal{H}$.  
  The  {\it associated  independence hypergraph}  is
 \begin{eqnarray*}\label{eq-2.1aax}
 \bar\Delta\mathcal{H}=\bigcup_{\sigma\in \mathcal{H}} \bar\Delta\sigma,
 \end{eqnarray*}
 which  is the  smallest   independence hypergraph  containing    $\mathcal{H}$.
 The {\it  lower-associated  independence hypergraph}  is
 \begin{eqnarray*}\label{eq-2.1aaw}
 \bar\delta\mathcal{H}= \bigcup_{\bar\Delta\sigma\subseteq\mathcal{H}} \{\sigma\},
 \end{eqnarray*}
  which    is the  largest   independence  hypergraph   contained in    $\mathcal{H}$.
 \end{definition}
 By  Definition~\ref{d-1.3},   we  have  a  diagram
    \begin{eqnarray*}\label{dg-a1}
  \xymatrix{
  \delta\mathcal{H}\ar[rd]^-{i_\delta}  &  & \Delta\mathcal{H}\\
  & \mathcal{H} \ar[ru]^-{i_\Delta} \ar[rd]^-{i_{\bar\Delta}}&\\
  \bar\delta\mathcal{H}\ar[ru]^-{i_{\bar\delta}}  &  &\bar\Delta\mathcal{H}
  }
  \end{eqnarray*}
  such that each arrow is a  canonical inclusion  of hypergraphs.   
  \begin{lemma}\label{le-882890}
  $\bar\Delta=\gamma\delta\gamma$  and     $\bar\delta=\gamma\Delta\gamma$.  
  \end{lemma}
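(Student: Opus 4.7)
The plan is to use the observation that the complement operator $\gamma$ is a bijection on the set of all hypergraphs on $V$ which is order-reversing under inclusion and which, as noted in the excerpt after Definition~\ref{d-1.008}, interchanges the class of simplicial complexes with the class of independence hypergraphs. Combined with the fact that $\gamma^2 = {\rm id}$, these structural properties let us transport the defining universal properties of $\Delta$ and $\delta$ across $\gamma$ to obtain those of $\bar\Delta$ and $\bar\delta$, so essentially no computation is needed beyond bookkeeping.

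For the first identity $\bar\Delta = \gamma\delta\gamma$, I would argue as follows. By Definition~\ref{d-1.3}, $\bar\Delta\mathcal{H}$ is the smallest independence hypergraph containing $\mathcal{H}$. Applying $\gamma$ (which reverses inclusions and swaps the two classes of closed objects), the image $\gamma\bar\Delta\mathcal{H}$ is the largest simplicial complex contained in $\gamma\mathcal{H}$; but by definition this is exactly $\delta\gamma\mathcal{H}$. Hence $\gamma\bar\Delta\mathcal{H} = \delta\gamma\mathcal{H}$, and applying $\gamma$ to both sides and using $\gamma^2 = {\rm id}$ yields $\bar\Delta\mathcal{H} = \gamma\delta\gamma\mathcal{H}$.

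The second identity $\bar\delta = \gamma\Delta\gamma$ is proved by the same trick with the roles reversed. The object $\bar\delta\mathcal{H}$ is the largest independence hypergraph contained in $\mathcal{H}$; applying the order-reversing bijection $\gamma$ turns this into the smallest simplicial complex containing $\gamma\mathcal{H}$, which is $\Delta\gamma\mathcal{H}$. Thus $\gamma\bar\delta\mathcal{H} = \Delta\gamma\mathcal{H}$, and again $\gamma^2 = {\rm id}$ gives $\bar\delta = \gamma\Delta\gamma$.

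The only part that requires any real verification is the assertion that $\gamma$ is an order-reversing bijection that swaps simplicial complexes and independence hypergraphs, and this is immediate from Definition~\ref{d-1.008}: $\sigma \in \gamma\mathcal{H}$ iff $\sigma \notin \mathcal{H}$, so $\mathcal{H}_1 \subseteq \mathcal{H}_2 \iff \gamma\mathcal{H}_2 \subseteq \gamma\mathcal{H}_1$, and the closure condition "$\tau \subseteq \sigma \Rightarrow \tau \in \mathcal{H}$" for $\mathcal{H}$ translates exactly to "$\tau \supseteq \sigma \Rightarrow \tau \in \gamma\mathcal{H}$" for $\gamma\mathcal{H}$. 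I do not anticipate a real obstacle here; the main thing to be careful about is keeping track of which direction of inclusion is being reversed when transferring the extremal properties. As a sanity check, one could alternatively verify both identities by a direct element chase: $\tau \in \bar\Delta\mathcal{H}$ iff $\tau$ contains some $\sigma \in \mathcal{H}$, while $\tau \in \gamma\delta\gamma\mathcal{H}$ iff some nonempty subset of $\tau$ lies in $\mathcal{H}$, which is the same condition, and similarly for the second identity.
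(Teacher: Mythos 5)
Your proof is correct, but it takes a different route from the paper's. The paper proves the lemma by a direct element chase: it unwinds the definitions to show $\gamma\bar\Delta\gamma\mathcal{H}=\{\sigma\in\Delta[V]\mid \tau\in\mathcal{H}\text{ for all nonempty }\tau\subseteq\sigma\}=\delta\mathcal{H}$ and $\gamma\Delta\gamma\mathcal{H}=\{\sigma\in\Delta[V]\mid\tau\in\mathcal{H}\text{ for all }\tau\supseteq\sigma,\ \tau\subseteq V\}=\bar\delta\mathcal{H}$, then applies $\gamma^2={\rm id}$. You instead transport the extremal characterizations across the order-reversing involution $\gamma$: since $\gamma$ is an inclusion-reversing bijection exchanging simplicial complexes with independence hypergraphs, it carries ``smallest independence hypergraph containing $\mathcal{H}$'' to ``largest simplicial complex contained in $\gamma\mathcal{H}$,'' and likewise for the second identity. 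This is a clean, conceptually transparent argument; its only dependency is the assertion in Definition~\ref{d-1.3} that $\Delta\mathcal{H}$, $\delta\mathcal{H}$, $\bar\Delta\mathcal{H}$, $\bar\delta\mathcal{H}$ really are the smallest/largest objects of the stated kind (the paper states this as part of the definition, and it follows since each class is closed under unions), whereas the paper's computation works directly from the explicit union formulas and needs no extremal characterization. Your closing sanity-check element chase is essentially the paper's proof, so the two arguments meet in the end; both are valid.
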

  \begin{proof}
  For  any    hypergraph   $\mathcal{H}$    on    $V$,    we  have
\begin{eqnarray*}
\gamma \bar\Delta \gamma \mathcal{H}
&=&\{\sigma\in\Delta[V]  \mid \sigma\notin \bar\Delta   \gamma    \mathcal{H}\}
\nonumber\\
&=&\{\sigma\in\Delta[V]  \mid \sigma\not\supseteq  \tau {\rm~for~any~}\tau\in  \gamma    \mathcal{H} \}
\nonumber\\
&=&\{\sigma\in\Delta[V]  \mid \sigma\not\supseteq  \tau {\rm~for~any~}\tau\notin  \mathcal{H},  {\rm~where~}  \tau\in\Delta[ V]\}
\nonumber\\
&=&\{\sigma\in\Delta[V]  \mid \tau\in \mathcal{H}{\rm~for~any~} \tau\subseteq\sigma,  {\rm~where~}  \tau\in\Delta[ V]\}
\nonumber\\
&=&\delta\mathcal{H}. 
\end{eqnarray*}
Therefore,  $\bar\Delta=\gamma\delta\gamma$.    Moreover,    For  any    hypergraph   $\mathcal{H}$    on    $V$,    we  have 
\begin{eqnarray*}
\gamma    \Delta\gamma    \mathcal{H}&=&\{\sigma\in\Delta[V]  \mid \sigma\notin \Delta\gamma    \mathcal{H}\}
\nonumber\\
&=&\{\sigma\in\Delta[V]  \mid \sigma\not\subseteq  \tau {\rm~for~any~}\tau\in  \gamma    \mathcal{H}\}
\nonumber\\
&=&\{\sigma\in\Delta[V]  \mid \sigma\not\subseteq  \tau {\rm~for~any~}\tau\notin  \mathcal{H},  {\rm~where~}\tau\in\Delta[V]  \}
\nonumber\\
&=&\{\sigma\in\Delta[V]  \mid \tau\in \mathcal{H}{\rm~for~any~}\tau\supseteq\sigma,  {\rm~where~}\tau\in\Delta[V]  \}
\nonumber\\
&=&\bar\delta    \mathcal{H}.
\end{eqnarray*}
Therefore,   $\bar\delta=\gamma\Delta\gamma$.  
  \end{proof}
  
   By  \cite[Subsection~2.1]{jktr2}  and  Lemma~\ref{le-882890},   
  the  following  relations  among  $\gamma$,  
  $\Delta$,  $\delta$,  $\bar\Delta$  and  $\bar\delta$  hold:     
 (1).  $\gamma^2={\rm  id}$,  (2).  $\bar\Delta=\gamma\delta\gamma$  and     $\bar\delta=\gamma\Delta\gamma$,   (3). $\Delta\delta=\delta$, $\bar\Delta\bar\delta=\bar\delta$, 
$\delta\Delta=\Delta$   and   $\bar\delta\bar\Delta=\bar\Delta$,   
(4).    $\Delta^2=\Delta$,  $\bar\Delta^2=\bar\Delta$,   
  $\delta^2=\delta$   and   $\bar\delta^2=\bar\delta$,      
  (5).        $(\delta\bar\Delta)^2=(\delta\gamma\delta\gamma)^2=\delta\gamma\delta\gamma=\delta\bar\Delta$,   
  $(\bar\Delta\delta)^2=(\gamma\delta\gamma\delta)^2=\gamma\delta\gamma\delta=\bar\Delta \delta$,  
  $(\Delta\bar\delta)^2=(\Delta\gamma\Delta\gamma)^2=\Delta\gamma\Delta\gamma=\Delta\bar\delta$   and    $(\bar\delta \Delta)^2=(\gamma\Delta\gamma\Delta)^2=\gamma\Delta\gamma\Delta=\bar\delta \Delta$.   Moreover,    since  the  associated  simplicial  complex  of   any  nonempty   independence  hypergraph     is    $\Delta[V]$,   we  have    
  \begin{eqnarray*}
  \Delta\bar\Delta(\mathcal{H})&=&
  \begin{cases}
   \Delta[V],  & \mathcal{H}\neq \emptyset,\\
   \emptyset,  &\mathcal{H}=\emptyset,
   \end{cases}\\
   \Delta\bar\delta(\mathcal{H})&=&
   \begin{cases}
   \Delta[V], &   \{V\}\in  \mathcal{H},\\
   \emptyset,& \{V\}\notin  \mathcal{H}. 
 \end{cases}
  \end{eqnarray*}
  Thus  
   the  following relations  hold  as well:   (6).        $\Delta^2\bar\Delta=\bar\Delta\Delta\bar\Delta=\delta\Delta\bar\Delta=\bar\delta\Delta\bar\Delta=\Delta\bar\Delta$  and   $\Delta^2\bar\delta=\bar\Delta\Delta\bar\delta=\delta\Delta\bar\delta=\bar\delta\Delta\bar\delta=\Delta\bar\delta$.

Let $\mathcal{H}$  and  $\mathcal{H}'$  be
two  hypergraphs on $V$.  We  have the following  observations:
\begin{enumerate}[(i).]
\item
$\gamma(\mathcal{H}\cap\mathcal{H}')=\gamma\mathcal{H}\cup\gamma\mathcal{H}'$,  $\gamma(\mathcal{H}\cup\mathcal{H}')=\gamma\mathcal{H}\cap\gamma\mathcal{H}'$,
\item
$\Delta(\mathcal{H} \cap\mathcal{H}')\subseteq \Delta\mathcal{H} \cap \Delta\mathcal{H}'$,
 $\Delta(\mathcal{H} \cup\mathcal{H}')= \Delta\mathcal{H} \cup \Delta\mathcal{H}'$,
 \item
$\delta(\mathcal{H} \cap\mathcal{H}')= \delta\mathcal{H} \cap \delta\mathcal{H}'$,
$\delta(\mathcal{H} \cup\mathcal{H}')\supseteq  \delta\mathcal{H} \cup \delta\mathcal{H}'$,
 \item
 $\bar\Delta(\mathcal{H} \cap\mathcal{H}')\subseteq \bar\Delta\mathcal{H} \cap \bar\Delta\mathcal{H}'$,
  $\bar\Delta(\mathcal{H} \cup\mathcal{H}')= \bar\Delta\mathcal{H} \cup \bar\Delta\mathcal{H}'$,
  \item
 $\bar\delta(\mathcal{H} \cap\mathcal{H}')= \bar\delta\mathcal{H} \cap \bar\delta\mathcal{H}'$,
 $\bar\delta(\mathcal{H} \cup\mathcal{H}')\supseteq  \bar\delta\mathcal{H} \cup \bar\delta\mathcal{H}'$.
 \end{enumerate}
If  both  $\mathcal{H}$     and    $\mathcal{H}'$   are  simplicial complexes  (resp.  independence   hypergraphs),  then   both  $\mathcal{H}\cap\mathcal{H}'$   and  $\mathcal{H}\cup\mathcal{H}'$   are  simplicial complexes (resp.  independence hypergraphs).

Let $\sigma=\{v_0,v_1,\ldots,v_n\}$  be an $n$-hyperedge on $V$  and  let   $\tau=\{u_0,u_1,\ldots,u_m\}$  be an $m$-hyperedge  on $V'$.  The  {\it box  product}  $\sigma\square \tau$  is  an  $(mn+m+n+1)$-hyperedge
\begin{eqnarray}\label{eq-88.998cc}
\sigma\square  \tau  = \{(v,u)\mid  v\in  \sigma,  u\in \tau\}
\end{eqnarray}
on    the Cartesian product     $V\times  V'$.    The  right-hand  side  of  (\ref{eq-88.998cc})   is  up  to  a
rearrangement  of   the  vertices with respect to  the  lexicographic order.   In  addition,   suppose $V\cap  V'=\emptyset$.  The {\it join}  $\sigma*\tau$   is  an $(n+m+1)$-hyperedge
\begin{eqnarray}\label{eq-88.765vs}
\sigma*\tau =\{v_0,v_1,\ldots,v_n,u_0,u_1,\ldots,u_m\}
\end{eqnarray}
on  the  disjoint  union   $V\sqcup  V'$,
where the right-hand side   of  (\ref{eq-88.765vs})  is   with respect to the total order    on  $V\sqcup V'$  given by  $v\prec  v'$  for any  $v\in V$  and  any  $v'\in  V'$.     We  observe
\begin{eqnarray*}
\Delta(\sigma  *\tau)&=&(\Delta  \sigma)  *  (\Delta  \tau),\\
 \bar\Delta(\sigma  *\tau)&=&(\bar\Delta  \sigma)  *  (\bar\Delta  \tau), 
\end{eqnarray*}
where  the  associated  independence  hypergraphs  of   $\sigma  *\tau$,    $\sigma $    and   $\tau$
   are    taken  with  respect to  $V\sqcup  V'$,      $V$   and   $V'$   respectively.

\begin{definition}\label{d-1.7}
Let  $V$  and  $V'$  be  two  finite sets.   Let $\mathcal{H}$  be  a  hypergraph  on  $V$   and   let   $\mathcal{H}'$  be a   hypergraph  on  $V'$.     
\begin{enumerate}[(1).]
\item
  We  define  the  {\it  box  product}   of   $\mathcal{H}$   and    $\mathcal{H}'$   as
\begin{eqnarray*}
 \mathcal{H}\square  \mathcal{H}' =\{\sigma\square \sigma'\mid \sigma\in \mathcal{H},  \sigma'\in\mathcal{H}'\}.
\end{eqnarray*}
Then  $\mathcal{H}\square  \mathcal{H}'$   is  a  hypergraph  on  $V\times V'$;
\item
 Suppose  in  addition      $V\cap V'=\emptyset$.    
  We define the {\it join}   of   $\mathcal{H}$   and    $\mathcal{H}'$      as
\begin{eqnarray*}
 \mathcal{H}*\mathcal{H}' =\{\sigma*\sigma'\mid \sigma\in\mathcal{H}{\rm ~and~}\sigma'\in\mathcal{H}'\}  \cup\mathcal{H}\cup\mathcal{H}'.
\end{eqnarray*}
Then  $\mathcal{H}*\mathcal{H}'$   is   a   hypergraph  on    $V\sqcup  V'$.
\end{enumerate}
\end{definition}

By Definition~\ref{d-1.7},   we  observe  the followings:
\begin{enumerate}[(i)'.]
\item
$\Delta(\mathcal{H} *\mathcal{H}')= \Delta\mathcal{H} * \Delta\mathcal{H}'$,
\item
$\delta(\mathcal{H} *\mathcal{H}')= \delta\mathcal{H} * \delta\mathcal{H}'$,
\item
 $\bar\Delta(\mathcal{H} *\mathcal{H}')= \bar\Delta\mathcal{H} * \bar\Delta\mathcal{H}'$,
 \item
 $\bar\delta(\mathcal{H} *\mathcal{H}')= \bar\delta\mathcal{H} * \bar\delta\mathcal{H}'$,
 \item
 $\mathcal{H}_1  *(\mathcal{H}_2  \cup \mathcal{H}_3)  =  (\mathcal{H}_1  *\mathcal{H}_2)  \cup  (\mathcal{H}_1  *\mathcal{H}_3) $,
 \item
   $\mathcal{H}_1  *(\mathcal{H}_2  \cap \mathcal{H}_3)  =  (\mathcal{H}_1  *\mathcal{H}_2)  \cap  (\mathcal{H}_1  *\mathcal{H}_3) $,
   \item
   $\mathcal{H}_1  \square (\mathcal{H}_2  \cup \mathcal{H}_3)  =  (\mathcal{H}_1  \square \mathcal{H}_2)  \cup  (\mathcal{H}_1  \square \mathcal{H}_3) $,
   \item
   $\mathcal{H}_1  \square (\mathcal{H}_2  \cap \mathcal{H}_3)  =  (\mathcal{H}_1  \square \mathcal{H}_2)  \cap  (\mathcal{H}_1  \square \mathcal{H}_3) $,
   \item
   $\mathcal{H}_1  \square (\mathcal{H}_2  * \mathcal{H}_3)=   (\mathcal{H}_1  \square \mathcal{H}_2)  *    (\mathcal{H}_1  \square \mathcal{H}_3) $,
   \end{enumerate}
   where     $\bar\Delta$ and $\bar\delta$   on the left-hand sides are with respect to $V\sqcup  V'$  while       $\bar\Delta$ and $\bar\delta$   on the right-hand sides are with respect to  $V$  and  $V'$.

Let $\mathcal{K}$  be   a  simplicial  complex  on  $V$  and   let  $\mathcal{K}'$  be a   simplicial complex  on $V'$.  
 Then
  $\mathcal{K}*\mathcal{K}'$  is  a   simplicial complex   on  $V\sqcup  V'$   and
  both  $\Delta(\mathcal{K}\square \mathcal{K}')$     and     $\delta(\mathcal{K}\square \mathcal{K}')$    are      simplicial  complexes   on   $V\times  V'$.   
Let $\mathcal{L}$  be  an  independence  hypergraph  on  $V$  and   let  $\mathcal{L}'$  be an  independence  hypergraph   on $V'$.   
 Then
  $\mathcal{L}*\mathcal{L}'$  is an   independence  hypergraph  on  $V\sqcup  V'$    and  
  both  $\bar\Delta(\mathcal{L}\square \mathcal{L}')$     and     $\bar\delta(\mathcal{L}\square \mathcal{L}')$    are        independence  hypergraphs   on   $V\times  V'$.

\begin{example}
Let $V=\{v_0,v_1\}$  and   let   $V'=\{v'_0,v'_1,v'_2,v'_3\}$.
Let $\mathcal{H}=\{\{v_0\}, \{v_0,v_1\} \}$  be  a  hypergraph  on  $V$  and  let  $\mathcal{H}'=\{\{v'_0,v'_1\}, \{v'_0,v'_1,v'_2\}\}$  be  a  hypergraph  on  $V'$.
Let  $v_0\prec  v_1\prec  v'_0\prec  v'_1\prec  v'_2\prec  v'_3$.   Then
\begin{eqnarray*}
\mathcal{H}*\mathcal{H}'&=&\{\{v_0\},\{v_0,v_1\},\{v'_0,v'_1\},\{v'_0,v'_1,v'_2\},\{v_0,v'_0,v'_1\},\\
&&
\{v_0,v'_0,v'_1,v'_2\},\{v_0,v_1,v'_0,v'_1\}, \{v_0,v_1,v'_0,v'_1,v'_2\}\}
\end{eqnarray*}
is  a  hypergraph  on  $V\sqcup  V'$
 and
 \begin{eqnarray*}
\mathcal{H}\square \mathcal{H}'&=&\{\{(v_0,v'_0),  (v_0,v'_1)\},
\{(v_0,v'_0),(v_0,v'_1),(v_0,v'_2)\},  \\
&& \{(v_0,v'_0),  (v_0,v'_1), (v_1,v'_0),  (v_1,v'_1)\}, \\
&& \{(v_0,v'_0),(v_0,v'_1),(v_0,v'_2),(v_1,v'_0),(v_1,v'_1),(v_1,v'_2)\}\}
\end{eqnarray*}
is  a  hypergraph  on  $V\times  V'$.
Moreover,
\begin{enumerate}[(1). ]
\item
$\Delta\mathcal{H}=\{\{v_0\},\{v_1\}, \{v_0,v_1\} \}$,    
 $\delta\mathcal{H}= \{ \{v_0\}\}$,    
 $\bar \Delta\mathcal{H}=\bar\delta\mathcal{H}=\mathcal{H}$,  
\item
$\Delta\mathcal{H}'=\{\{v'_0\},\{v'_1\}, \{v'_2\}, \{v'_0,v'_1\}, \{v'_0,v'_2\}, \{v'_1,v'_2\}, \{v'_0,v'_1,v'_2\}\}$,   
$ \delta\mathcal{H}'=\emptyset$,   
 $\bar \Delta\mathcal{H}'=\{\{v'_0,v'_1\},  \{v'_0,v'_1,v'_2\},  \{v'_0,v'_1,v'_2,v'_3\}\}$,    
 $\bar \delta\mathcal{H}'=\emptyset$,
\item
$\Delta\mathcal{H} *\Delta\mathcal{H}' =\Delta\{v_0,v_1,v'_0,v'_1,v'_2\}$,   
$\delta\mathcal{H}*\delta\mathcal{H}'=\{ \{v_0\}\}$,   
  $\bar\Delta\mathcal{H} *\bar\Delta\mathcal{H}' =    \{
  \{v_0, v'_0,v'_1\}, 
  \{v_0, v'_0,v'_1,v'_2\},   \{v_0, v'_0, v'_1,v'_2,v'_3\},   \{v_0,v_1,v'_0,v'_1\},\\
    \{v_0,v_1, v'_0,v'_1,v'_2\}, 
   \{v_0,v_1,v'_0,v'_1,v'_2,v'_3\}\} \cup  \mathcal{H}  \cup\mathcal{H}' $,    
  $\bar\delta\mathcal{H} *\bar\delta\mathcal{H}' =\mathcal{H}$.  
  \end{enumerate}
\end{example}

\subsection{Morphisms  of  hypergraphs  and   simplicial  maps}

Let  $V$  and  $V'$  be  two finite sets.
Let  $\mathcal{H}$  be  a  hypergraph  on  $V$  and  let  $\mathcal{H}'$  be a    hypergraph  on  $V'$.    A  {\it  morphism}  $f: \mathcal{H}\longrightarrow \mathcal{H}'$   of  hypergraphs  is a map $f: V\longrightarrow V'$  such that  for any hyperedge  $\sigma=\{v_0,v_1,\ldots,v_n\}$  in $\mathcal{H}$,  its image $f(\sigma)$  is a hyperedge  in  $\mathcal{H}'$   spanned by the (not necessarily distinct)  vertices  $f(v_0)$, $f(v_1)$, $\ldots$, $f(v_n)$.
Let   $\mathcal{K}$  be  a    simplicial  complex   on  $V$  and  let   $\mathcal{K}'$  be a      simplicial complex   on  $V'$.    We  call a  morphism  $f: \mathcal{K}\longrightarrow \mathcal{K}'$   a  {\it  simplicial map}.       Let  $\mathcal{L}$  be  an   independence  hypergraph   on  $V$  and  let   $\mathcal{L}'$  be an       independence  hypergraph   on  $V'$.    We  call a  morphism  $f: \mathcal{L}\longrightarrow \mathcal{L}'$   a  {\it  morphism}  of  independence hypergraphs.

Let  $f: \mathcal{H}\longrightarrow \mathcal{H}'$  be a morphism of  hypergraphs.
  Then   $f$  canonically  induces two  simplicial maps
\begin{eqnarray*}
  \Delta f:  && \Delta\mathcal{H}\longrightarrow \Delta\mathcal{H}',\label{eq-3bbd}\\
    \delta f:  && \delta\mathcal{H}\longrightarrow \delta\mathcal{H}'  \label{eq-3.bbh}
\end{eqnarray*}
 and  two  morphisms  of  independence  hypergraphs
 \begin{eqnarray*}
   \bar\Delta f:  &&\bar\Delta\mathcal{H}\longrightarrow\bar \Delta\mathcal{H}',\\
    \bar \delta f:  &&\bar \delta\mathcal{H}\longrightarrow \bar\delta\mathcal{H}'.
 \end{eqnarray*}
We   have  the following   naturalities:

(1).   
The  canonical  inclusions   $i_\Delta$  and   $i_\delta$    are   natural.  That  is,  for any  morphism  $f:  \mathcal{H}\longrightarrow  \mathcal{H}'$  we  have the commutative diagrams
\begin{eqnarray*}
\xymatrix{
\mathcal{H}\ar[d]_{i_\Delta}\ar[r]^{f}  &\mathcal{H}'\ar[d]^{i'_{\Delta}}\\
\Delta\mathcal{H}\ar[r]^{\Delta  f}  &\Delta\mathcal{H}',
}
~~~~~~~~~~~~
\xymatrix{
\mathcal{H}\ar[d]_{i_\delta}\ar[r]^{f}  &\mathcal{H}'\ar[d]^{i'_{\delta}}\\
\delta\mathcal{H}\ar[r]^{\delta  f}  &\delta\mathcal{H}'.
}
\end{eqnarray*}

(2).  
The  join   of  hypergraphs  is   natural.    That  is,  for
  any  two  morphisms  $f:  \mathcal{H}_1\longrightarrow \mathcal{H}_2$   and   $f':  \mathcal{H}'_1\longrightarrow \mathcal{H}'_2$,  we  have a  canonical  induced morphism
  $f  *  f':  \mathcal{H}_1  *\mathcal{H}'_1\longrightarrow  \mathcal{H}_2  *\mathcal{H}'_2$  such that  $(f  *  f')\mid_{\mathcal{H}_1}=f$, $(f  *  f')\mid_{\mathcal{H}'_1}=f'$  and  for any  $ \sigma\in \mathcal{H}_1$  and  any  $\sigma'\in \mathcal{H}'_1$,    the  join $\sigma*\sigma'$  in  $\mathcal{H}_1*\mathcal{H}'_1$   is sent  to  the  hyperedge
  $f(\sigma)*  f'(\sigma')$ in    $\mathcal{H}_2*\mathcal{H}'_2$.   The  followings are  direct
  \begin{eqnarray*}
&  \Delta(f*f')=\Delta  f  * \Delta  f',~~~~~~
\delta(f*f')=\delta  f  * \delta  f',\nonumber\\
& \bar\Delta(f*f')=\bar\Delta  f  * \bar\Delta  f',~~~~~~
 \bar\delta(f*f')=\bar\delta  f  * \bar\delta  f',  
\end{eqnarray*}
where     $\bar\Delta$ and $\bar\delta$   on the left-hand sides are with respect to $V\sqcup  V'$  while       $\bar\Delta$ and $\bar\delta$   on the right-hand sides are with respect to  $V$  and  $V'$.

(3).  
 The  box  product  of  hypergraphs  is   natural.  That  is,   for
  any  two  morphisms  $f:  \mathcal{H}_1\longrightarrow \mathcal{H}_2$   and   $f':  \mathcal{H}'_1\longrightarrow \mathcal{H}'_2$,  we  have a  canonical  induced morphism
  $f  \square  f':  \mathcal{H}_1  \square \mathcal{H}'_1\longrightarrow  \mathcal{H}_2  \square \mathcal{H}'_2$
   sending  $\sigma\square \sigma'$  to  $f(\sigma)\square  f'(\sigma')$.   

 \section{Random  hypergraphs   and   random  simplicial  complexes}\label{s3333333a}

\subsection{General  random  hypergraphs  and   random  simplicial  complexes}

Let  ${\bf H}(V)$  be the category whose objects are  hypergraphs on $V$   and whose morphisms are  morphisms of hypergraphs.  Let  ${\bf K}(V)$  be the category whose objects are    simplicial complexes on $V$   and whose morphisms are  simplicial maps.  Let  ${\bf  L}(V)$  be the category whose objects are  independence  hypergraphs on $V$   and whose morphisms are  morphisms of   independence  hypergraphs.
 Both  ${\bf K}(V)$   and  ${\bf  L}(V)$   are   full  subcategories   of  ${\bf  H}(V)$.

A  {\it  random  hypergraph}  (resp. {\it  random  simplicial  complex} and {\it  random  independence  hypergraph})  on  $V$   is   a  probability  function  on   ${\bf Obj}({\bf H}(V))$  (resp.  ${\bf Obj}({\bf  K}(V))$  and
 ${\bf Obj}({\bf  L}(V))$).
  Let  $D({\bf H}(V))$  (resp.  $D ({\bf K}(V)) $  and  $D ({\bf L}(V)) $)  be  the  functional space  of  all the  probability  functions
on  ${\bf Obj}({\bf H}(V))$  (resp.  ${\bf Obj}({\bf  K}(V))$  and
 ${\bf Obj}({\bf  L}(V))$).
 For  any  map
 \begin{eqnarray*}
 f: &&  {\bf H}(V)\longrightarrow  {\bf H}(V)
 \end{eqnarray*}
  there  is an induced map
  \begin{eqnarray*}
  Df: &&  D({\bf H}(V))\longrightarrow  D({\bf H}(V))
  \end{eqnarray*}
  given by
    \begin{eqnarray*}
  (Df)(\varphi)(\mathcal{H})=\sum_{f(\mathcal{H}')=\mathcal{H}} \varphi(\mathcal{H}'),
  \end{eqnarray*}
  where  $\varphi\in D({\bf H}(V))$  and  $\mathcal{H},\mathcal{H}'\in {\bf  Obj}({\bf H}(V))$.
  Let $f$  be $\Delta,\delta,\bar\Delta$ and $\bar\delta$ respectively.
  We  have the  induced maps
 \begin{eqnarray*}
  D\Delta,  D\delta:  &&D({\bf H}(V))\longrightarrow  D({\bf K}(V)),\\
      D\bar\Delta,  D\bar\delta:  &&D({\bf H}(V))\longrightarrow  D({\bf L}(V)).
 \end{eqnarray*}

Moreover,   
 for any map
 $\mu: {\bf Obj}({\bf H}(V))\times{\bf Obj}({\bf H}(V))\longrightarrow  {\bf Obj}({\bf H}(V))$   
 there is  an  induced map
  \begin{eqnarray*}
  D\mu:&&  D({\bf H}(V)) \times D({\bf H}(V)) \longrightarrow  D({\bf H}(V))
  \end{eqnarray*}
given  by
\begin{eqnarray*}
D\mu(\varphi',\varphi'')(\mathcal{H})=
\sum_{\mu(\mathcal{H}',\mathcal{H}'')=\mathcal{H}}
\varphi'(\mathcal{H}')\varphi''(\mathcal{H}''),
\end{eqnarray*}
where $\varphi',\varphi''\in  D({\bf H}(V))$  and   $\mathcal{H},  \mathcal{H}',\mathcal{H}''\in {\rm Obj}({\bf H}(V))$.
Let $\mu(\mathcal{H}',\mathcal{H}'')$  be  $\mathcal{H}'\cap\mathcal{H}''$ and
$\mathcal{H}'\cup\mathcal{H}''$
  respectively.
We  obtain  the  induced maps
 \begin{eqnarray*}
 D\cap,  D\cup:&&  D({\bf H}(V)) \times D({\bf H}(V)) \longrightarrow  D({\bf H}(V)).
 \end{eqnarray*}
The restrictions of  $D\cap$  and  $D\cup$     give the   maps
  \begin{eqnarray*}
 D\cap,  D\cup: && D({\bf K}(V)) \times D({\bf K}(V)) \longrightarrow  D({\bf K}(V)),\\
 D\cap,  D\cup:  && D({\bf L}(V)) \times D({\bf L}(V)) \longrightarrow  D({\bf L}(V)).
 \end{eqnarray*}

    Furthermore,  let $V'$  and  $V''$  be  two   finite sets.  Then
    the  box  product
  \begin{eqnarray*}
\square:  &&  {\bf Obj}({\bf H}(V'))\times  {\bf Obj}({\bf H}(V''))\longrightarrow  {\bf Obj}({\bf H}(V'\times V''))
 \end{eqnarray*}
 sending  $(\mathcal{H}',\mathcal{H}'')$ to  $\mathcal{H}'\square \mathcal{H}''$  induces a map
  \begin{eqnarray*}
 D\square:  &&  D({\bf H}(V'))\times  D({\bf H}(V''))\longrightarrow  D({\bf H}(V'\times  V''))
 \end{eqnarray*}
 given  by
  \begin{eqnarray}\label{eq-88902}
 (D\square)(\varphi',\varphi'')(\mathcal{H})= \sum_{\mathcal{H}'\square\mathcal{H}''=\mathcal{H}}  \varphi'(\mathcal{H}')\varphi''(\mathcal{H}'').
 \end{eqnarray}
 Here in   (\ref{eq-88902}),  we  take  $\varphi'\in   D({\bf H}(V'))$,   $\varphi''\in   D({\bf H}(V''))$,  $\mathcal{H}'\in {\bf Obj}({\bf H}(V'))$,   $\mathcal{H}''\in {\bf Obj}({\bf H}(V''))$   and  $\mathcal{H}\in {\bf Obj}({\bf H}(V'\times   V''))$.   
Suppose  in  addition $V'\cap  V''=\emptyset$.    Then
     the join
 \begin{eqnarray*}
\ast:  &&  {\bf Obj}({\bf H}(V'))\times  {\bf Obj}({\bf H}(V''))\longrightarrow  {\bf Obj}({\bf H}(V'\sqcup V''))
 \end{eqnarray*}
 sending  $(\mathcal{H}',\mathcal{H}'')$ to  $\mathcal{H}'*\mathcal{H}''$  induces a map
 \begin{eqnarray*}
 D\ast:  &&  D({\bf H}(V'))\times  D({\bf H}(V''))\longrightarrow  D({\bf H}(V'\sqcup  V''))
 \end{eqnarray*}
 given  by
 \begin{eqnarray}\label{eq-88901}
 (D\ast)(\varphi',\varphi'')(\mathcal{H})= \sum_{\mathcal{H}'*\mathcal{H}''=\mathcal{H}}  \varphi'(\mathcal{H}')\varphi''(\mathcal{H}'').   
 \end{eqnarray}
  Here in  (\ref{eq-88901}),  we  take  $\varphi'\in   D({\bf H}(V'))$,   $\varphi''\in   D({\bf H}(V''))$,  $\mathcal{H}'\in {\bf Obj}({\bf H}(V'))$,   $\mathcal{H}''\in {\bf Obj}({\bf H}(V''))$   and  $\mathcal{H}\in {\bf Obj}({\bf H}(V'\sqcup  V''))$.

\begin{lemma}
\label{le-3.aaa8}
\begin{enumerate}[(1).]
\item
 $(D\cup)(D\Delta,D\Delta)=(D\Delta)(D\cup)$   and     $(D\cup)(D\bar\Delta,D\bar\Delta)=(D\bar\Delta)(D\cup)$,   
\item
 $(D\cap)(D\delta,D\delta)=(D\delta)(D\cap)$   and      $(D\cap)(D\bar\delta,D\bar\delta)=(D\bar\delta)(D\cap)$;
\item
$(D\ast) (D\Delta,D\Delta)=  (D\Delta)(D\ast)$,  
$(D\ast) (D\delta,D\delta)=  (D\delta)(D\ast)$,  
$(D\ast) (D\bar\Delta,D\bar\Delta)=  (D\bar\Delta)(D\ast)$  and   $(D\ast) (D\bar\delta,D\bar\delta)=  (D\bar\delta)(D\ast)$.
\end{enumerate}
\end{lemma}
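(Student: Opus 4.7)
The plan is to verify each identity in Lemma~\ref{le-3.aaa8} by directly unfolding the definitions of the induced maps $Df$, $D\mu$, $D\ast$ and then invoking the corresponding set-theoretic identities among $\Delta,\delta,\bar\Delta,\bar\delta$ and $\cup,\cap,*$ already recorded as observations (ii)--(v) and (i)'--(iv)' in Subsection~\ref{sect2.1111111}. All eight equalities follow the same bookkeeping pattern, so it suffices to carry out one of them in detail and then indicate how the others are obtained by substitution.

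As the template I would take the first half of part (1). Fix $\varphi',\varphi''\in D({\bf H}(V))$ and a simplicial complex $\mathcal{K}$ on $V$. Expanding the definitions yields
\begin{align*}
(D\cup)(D\Delta\varphi',D\Delta\varphi'')(\mathcal{K})
&=\sum_{\mathcal{K}'\cup\mathcal{K}''=\mathcal{K}}\ \sum_{\Delta\mathcal{H}'=\mathcal{K}'}\ \sum_{\Delta\mathcal{H}''=\mathcal{K}''}\varphi'(\mathcal{H}')\varphi''(\mathcal{H}'')\\
&=\sum_{\Delta\mathcal{H}'\cup\Delta\mathcal{H}''=\mathcal{K}}\varphi'(\mathcal{H}')\varphi''(\mathcal{H}''),
\end{align*}
while
\begin{align*}
(D\Delta)(D\cup)(\varphi',\varphi'')(\mathcal{K})
&=\sum_{\Delta\mathcal{H}=\mathcal{K}}\ \sum_{\mathcal{H}'\cup\mathcal{H}''=\mathcal{H}}\varphi'(\mathcal{H}')\varphi''(\mathcal{H}'')\\
&=\sum_{\Delta(\mathcal{H}'\cup\mathcal{H}'')=\mathcal{K}}\varphi'(\mathcal{H}')\varphi''(\mathcal{H}'').
\end{align*}
Observation (ii), namely $\Delta(\mathcal{H}'\cup\mathcal{H}'')=\Delta\mathcal{H}'\cup\Delta\mathcal{H}''$, shows that the two sets of pairs $(\mathcal{H}',\mathcal{H}'')$ over which one sums are literally the same, so the two expressions agree.

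The remaining seven identities are obtained by exactly the same two-step reindexing, with the pair $(\cup,\Delta)$ replaced in turn by $(\cup,\bar\Delta)$, $(\cap,\delta)$, $(\cap,\bar\delta)$, $(\ast,\Delta)$, $(\ast,\delta)$, $(\ast,\bar\Delta)$, $(\ast,\bar\delta)$, and observation (ii) replaced by (iv), (iii), (v), (i)', (ii)', (iii)', (iv)', respectively. In the join cases the underlying vertex set changes from $V',V''$ to $V'\sqcup V''$, but this plays no role in the manipulation. The only point that needs care is the middle rewriting in each calculation, where a double sum is collapsed into a single sum over pairs; it is legitimate precisely because the relevant identity from Subsection~\ref{sect2.1111111} establishes that the two parametrisations index exactly the same collection of pairs $(\mathcal{H}',\mathcal{H}'')$. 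Beyond this bijective matching of index sets the argument is purely formal, so no substantive obstacle is expected.
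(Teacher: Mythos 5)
Your proposal is correct and follows essentially the same route as the paper's own proof: expand both sides via the definitions of $D\mu$ and $Df$, collapse the double sums into a single sum over pairs $(\mathcal{H}',\mathcal{H}'')$, and identify the two index sets using the exact observations (ii)--(v) and (i)'--(iv)' that the paper also cites. Your assignment of which observation handles which identity matches the paper's verbatim, so there is nothing to add.
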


\begin{proof}
For any  $\varphi',\varphi''\in   D({\bf H}(V))$  and  any  $\mathcal{K}\in   {\bf K}(V) $,
 with the  help  of  Subsection~\ref{sect2.1111111}~(ii),
\begin{eqnarray*}
(D\cup)(D\Delta,D\Delta) (\varphi',\varphi'')(\mathcal{K})&=&\sum_{ \mathcal{K}'\cup\mathcal{K}''=\mathcal{K}}
\Big((D\Delta) (\varphi')(\mathcal{K}')\Big) \Big((D\Delta) (\varphi'')(\mathcal{K}'')\Big)\\
&=&\sum_{ \mathcal{K}'\cup\mathcal{K}''=\mathcal{K}}  \Big(\sum_{\Delta\mathcal{H}'=\mathcal{K}'} \varphi'(\mathcal{H}') \Big)  \Big(\sum_{\Delta\mathcal{H}''=\mathcal{K}''} \varphi''(\mathcal{H}'') \Big)\\
&=&\sum_{ \Delta\mathcal{H}'\cup\Delta\mathcal{H}''=\mathcal{K}}   \varphi'(\mathcal{H}') \varphi''(\mathcal{H}'')
\\
 &=&\sum_{\Delta(\mathcal{H}'\cup\mathcal{H}'')=\mathcal{K}}\varphi'(\mathcal{H}') \varphi''(\mathcal{H}'') \\
&=&\sum_{\Delta\mathcal{H}=\mathcal{K}}  \sum_{\mathcal{H}'\cup\mathcal{H}''=\mathcal{H}}  \varphi'(\mathcal{H}') \varphi''(\mathcal{H}'') \\
&=&(D\Delta)(D\cup) (\varphi',\varphi'')(\mathcal{K}).
\end{eqnarray*}
We  obtain $(D\cup)(D\Delta,D\Delta)=(D\Delta)(D\cup)$.
  Other identities can be proved analogously.  The second  identity  in (1)   is  proved with the  help  of  Subsection~\ref{sect2.1111111}~(iv).  The two identities in (2) are respectively proved   with the  help  of  Subsection~\ref{sect2.1111111}~(iii)  and  (v).  The  identities in (3)  are  proved with the help  of  Subsection~\ref{sect2.1111111}~(i)'  -  (iv)'.
\end{proof}

\subsection{Random hypergraphs   and  random  simplicial  complexes   of   Erd\"os-R\'enyi  type}\label{sss3.22222}

 Let   $p: \Delta[V]\longrightarrow  [0,1]$.
 Let $\mathcal{K}$  be  a  simplicial  complex on  $V$.
 An  {\it   external face}  of  $\mathcal{K}$
is  a  hyperedge  $\sigma\in\Delta[V]$  such that  $\sigma\notin \mathcal{K}$  and $\tau  \in  \mathcal{K}$  for any  nonempty
  proper subset  $\tau\subsetneq \sigma$.
  Let  $E(\mathcal{K})$  be the collection of  all the   external faces of $\mathcal{K}$.
 Let  $\mathcal{L}$  be an  independence hypergraph on  $V$.  A   {\it  co-external face}  of  $\mathcal{L}$
is  a  hyperedge  $\sigma\in\Delta[V]$  such that  $\sigma\notin \mathcal{L}$  and $\tau  \in  \mathcal{L}$  for any
  proper superset  $\tau\supsetneq \sigma$,  where $\tau\in\Delta[V]$.
  Let  $\bar E(\mathcal{L})$  be the collection of  all the  co-external faces of $\mathcal{L}$.

(1).  Consider  the   Erd\"os-R\'enyi-type  model
${\rm \bar  P}_p$  of  random hypergraphs  given by
\begin{eqnarray*}
\bar{\rm P}_p(\mathcal{H})=\prod_{\sigma\in \mathcal{H}}  p(\sigma) \prod_{\sigma\notin \mathcal{H}} \big(1-p(\sigma)\big)
\end{eqnarray*}
for any  $\mathcal{H}\in  {\bf Obj}({\bf H}(V))$.
We  choose  each  element $\sigma\in \Delta[V]$  to be  a  hyperedge  of $\mathcal{H}$  independently
at  random   with  probability  $p(\sigma)$.  This   randomly generated
hypergraph $\mathcal{H}$  satisfies the probability distribution  $\bar{\rm P}_p$,
 written   $\mathcal{H}\sim \bar{\rm P}_p$.

(2).  
Consider  the   Erd\"os-R\'enyi-type  model
${\rm  P}_p$  of  random  simplicial  complexes  given by
  \begin{eqnarray*}
  {\rm P}_{p}(\mathcal{K})=\prod_{\sigma\in\mathcal{K}}  p(\sigma)\prod_{\sigma\in  E(\mathcal{K})}\big(1-p(\sigma)\big),
  \end{eqnarray*}
  where   $ \mathcal{K}\in  {\rm Obj}({\bf K}(V))$.   We  generate  the  $0$-skeleton   of   $\mathcal{K}$   by choosing each $0$-hyperedge $\{v\}\in\Delta[V]$    independently  at random   with
  probability  $p(\{v\})$.    For any nonnegative  integer $k$,   once the  $k$-skeleton   of   $\mathcal{K}$   is    randomly generated,  we  generate  the  $(k+1)$-skeleton   of   $\mathcal{K}$  by choosing each  external  $(k+1)$-face  $\sigma$  of the  $k$-skeleton   of   $\mathcal{K}$   independently at random with  probability  $p(\sigma)$.  By  an  induction  on  $k$,  the  final randomly generated  simplicial complex  $\mathcal{K}$
  satisfies  the probability distribution  ${\rm  P}_{p}$,  written  $\mathcal{K}\sim {\rm  P}_{p}$.

 (3).  
  Consider  the   Erd\"os-R\'enyi-type  model
${\rm  Q}_p$  of  random  independence  hypergraphs  given by
  \begin{eqnarray*}
{\rm Q}_{p}(\mathcal{L})=\prod_{\sigma\in\mathcal{L}}  p(\sigma)\prod_{\sigma\in  \bar  E(\mathcal{L})}\big(1-p(\sigma)\big),
  \end{eqnarray*}
  where   $ \mathcal{L}\in  {\rm Obj}({\bf  L}(V))$.   
  We   choose  the  $(|V|-1)$-hyperedge   $V\in\Delta[V]$   at  random  with  probability  $p(V)$  
  to  be  a  hyperedge  of  $\mathcal{L}$.    
   Once all  the  $i$-hyperedges,  $i=k+1, k+2,\ldots, |V|-1$,    of   $\mathcal{L}$   are
  randomly  generated,   we  generate  the   $k$-hyperedges   of   $\mathcal{L}$   by choosing each  co-external face      $\sigma$  of the  collection of  all  the    $(k+1)$-hyperedges   of   $\mathcal{L}$     independently at random with  probability  $p(\sigma)$.   By   an  induction  on  $k$  in reverse,   The  final randomly generated  independence  hypergraph  $\mathcal{L}$
   satisfies  the probability distribution  ${\rm Q}_{p}$,  written  $\mathcal{L}\sim {\rm Q}_{p}$.


\begin{lemma}{\rm  (cf.  \cite[Lemma~4.4]{jktr2})}.
\label{cplmt}
$(D\gamma)(\bar{\rm   P}_p)= \bar{\rm   P}_{1-p} $.   
\label{le-3.22876}
\end{lemma}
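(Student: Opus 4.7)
The plan is to unfold the definition of $D\gamma$ and exploit the fact, recorded already in the excerpt, that $\gamma$ is an involution on $\mathbf{Obj}(\mathbf{H}(V))$.

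First I would apply the definition
\begin{eqnarray*}
(D\gamma)(\bar{\rm P}_p)(\mathcal{H}) = \sum_{\gamma(\mathcal{H}') = \mathcal{H}} \bar{\rm P}_p(\mathcal{H}').
\end{eqnarray*}
Since $\gamma^2 = \mathrm{id}$, the equation $\gamma(\mathcal{H}') = \mathcal{H}$ has the unique solution $\mathcal{H}' = \gamma\mathcal{H}$, so the sum collapses to a single term and
\begin{eqnarray*}
(D\gamma)(\bar{\rm P}_p)(\mathcal{H}) = \bar{\rm P}_p(\gamma\mathcal{H}).
\end{eqnarray*}

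Next I would substitute into the defining product formula. By Definition~\ref{d-1.008}, $\sigma \in \gamma\mathcal{H}$ if and only if $\sigma \in \Delta[V]\setminus\mathcal{H}$, so the index sets $\{\sigma \in \gamma\mathcal{H}\}$ and $\{\sigma \notin \gamma\mathcal{H}\}$ in $\Delta[V]$ are exactly $\{\sigma \notin \mathcal{H}\}$ and $\{\sigma \in \mathcal{H}\}$ respectively. Therefore
\begin{eqnarray*}
\bar{\rm P}_p(\gamma\mathcal{H})
 = \prod_{\sigma \in \gamma\mathcal{H}} p(\sigma)\prod_{\sigma \notin \gamma\mathcal{H}}\bigl(1-p(\sigma)\bigr)
 = \prod_{\sigma \notin \mathcal{H}} p(\sigma)\prod_{\sigma \in \mathcal{H}}\bigl(1-p(\sigma)\bigr).
\end{eqnarray*}
Finally I would recognise the right-hand side as $\bar{\rm P}_{1-p}(\mathcal{H})$ by applying the substitution $p \mapsto 1-p$ to the definition of $\bar{\rm P}_p$; this yields the desired identity for every $\mathcal{H}\in\mathbf{Obj}(\mathbf{H}(V))$.

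There is essentially no obstacle here: the statement is a direct bookkeeping exercise once the involutivity $\gamma^2 = \mathrm{id}$ is used. The only point deserving care is the very first step, namely justifying that the preimage of $\mathcal{H}$ under $\gamma$ is a singleton so that the pushforward formula for $D\gamma$ reduces to simple evaluation at $\gamma\mathcal{H}$; everything else is matching factors between two products.
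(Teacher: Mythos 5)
Your proof is correct and complete: the involutivity $\gamma^2=\mathrm{id}$ collapses the pushforward sum to the single term $\bar{\rm P}_p(\gamma\mathcal{H})$, and swapping the index sets $\{\sigma\in\mathcal{H}\}$ and $\{\sigma\notin\mathcal{H}\}$ identifies this with $\bar{\rm P}_{1-p}(\mathcal{H})$. The paper itself gives no proof, only the citation to \cite[Lemma~4.4]{jktr2}, and your direct computation is exactly the standard argument that the cited result rests on.
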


\begin{lemma}{\rm  (cf. \cite[Lemma~4.5]{jktr2})}.
\label{le-3.1aa}
Let $p',p'': \Delta[V]\longrightarrow [0,1]$.  Write $(p'\cap p'')(\sigma)= p'(\sigma) p''(\sigma)$  and   $(p'\cup p'')(\sigma)= 1-(1-p'(\sigma))(1- p''(\sigma))$  for any $\sigma\in\Delta[V]$.
 Then  $(D\cap)(\bar {\rm P}_{p'},  \bar  {\rm  P}_{p''}) =  \bar  {\rm  P}_{p'\cap p''}$  and   $(D\cup)(\bar {\rm  P}_{p'},  \bar  {\rm  P}_{p''}) =  \bar  {\rm  P}_{p'\cup p''}$.  In  other words,   if  $\mathcal{H}'\sim \bar {\rm P}_{p'}$  and $\mathcal{H}''\sim \bar {\rm P}_{p''}$,       then
$\mathcal{H}'\cap \mathcal{H}''\sim  \bar {\rm P}_{p'\cap p''}$   and  $\mathcal{H}'\cup \mathcal{H}''\sim  \bar {\rm  P}_{p'\cup p''}$.
\end{lemma}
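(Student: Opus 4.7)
The plan is to prove both identities by a direct calculation, exploiting the fact that $\bar{\rm P}_{p'}$ and $\bar{\rm P}_{p''}$ factor as independent products indexed by the hyperedges $\sigma\in\Delta[V]$, so that the summation defining $D\cap$ and $D\cup$ can be split into a product of independent local sums, one per $\sigma$.

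I would first fix a hypergraph $\mathcal{H}$ on $V$ and expand
\begin{eqnarray*}
(D\cap)(\bar{\rm P}_{p'},\bar{\rm P}_{p''})(\mathcal{H})
=\sum_{\mathcal{H}'\cap\mathcal{H}''=\mathcal{H}}
\Big(\prod_{\sigma\in\mathcal{H}'}p'(\sigma)\prod_{\sigma\notin\mathcal{H}'}(1-p'(\sigma))\Big)
\Big(\prod_{\sigma\in\mathcal{H}''}p''(\sigma)\prod_{\sigma\notin\mathcal{H}''}(1-p''(\sigma))\Big).
\end{eqnarray*}
The constraint $\mathcal{H}'\cap\mathcal{H}''=\mathcal{H}$ decouples across $\sigma$: it says that for each $\sigma\in\mathcal{H}$ we must have $\sigma\in\mathcal{H}'$ and $\sigma\in\mathcal{H}''$, while for each $\sigma\notin\mathcal{H}$ at least one of the memberships $\sigma\in\mathcal{H}'$, $\sigma\in\mathcal{H}''$ must fail. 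Thus the double sum factors as a product over $\sigma\in\Delta[V]$ of independent local sums.

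For each $\sigma$ I would then compute that local factor: if $\sigma\in\mathcal{H}$ it equals $p'(\sigma)p''(\sigma)=(p'\cap p'')(\sigma)$, while if $\sigma\notin\mathcal{H}$, summing over the three admissible local configurations gives
\begin{eqnarray*}
p'(\sigma)(1-p''(\sigma))+(1-p'(\sigma))p''(\sigma)+(1-p'(\sigma))(1-p''(\sigma))=1-p'(\sigma)p''(\sigma).
\end{eqnarray*}
Multiplying the local factors reconstructs exactly $\bar{\rm P}_{p'\cap p''}(\mathcal{H})$, which settles the intersection identity.

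For the union identity I would repeat the same decoupling; the constraint $\mathcal{H}'\cup\mathcal{H}''=\mathcal{H}$ now forces $\sigma\notin\mathcal{H}'$ and $\sigma\notin\mathcal{H}''$ whenever $\sigma\notin\mathcal{H}$, and at least one of the two memberships to hold whenever $\sigma\in\mathcal{H}$. The local factor for $\sigma\notin\mathcal{H}$ becomes $(1-p'(\sigma))(1-p''(\sigma))=1-(p'\cup p'')(\sigma)$, and for $\sigma\in\mathcal{H}$ a similar three-term sum yields $1-(1-p'(\sigma))(1-p''(\sigma))=(p'\cup p'')(\sigma)$. Everything is a finite, elementary calculation; the only mild subtlety — the main thing to be careful about — is to verify that the factorization of the constrained sum into independent per-$\sigma$ sums is legitimate, which is immediate once one regards a hypergraph as the indicator function $\mathbf{1}_\mathcal{H}:\Delta[V]\to\{0,1\}$ and rewrites both product measures and the set operations $\cap,\cup$ pointwise in $\sigma$.
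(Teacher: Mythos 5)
Your proof is correct. Note that the paper does not actually prove this lemma itself — it only cites \cite[Lemma~4.5]{jktr2} — but your per-hyperedge factorization of the constrained sum (legitimized by viewing a hypergraph as an indicator function on $\Delta[V]$, so that the constraints $\mathcal{H}'\cap\mathcal{H}''=\mathcal{H}$ and $\mathcal{H}'\cup\mathcal{H}''=\mathcal{H}$ decouple pointwise) is exactly the standard argument, and it matches in substance the independent-trials reasoning the paper uses to prove the analogous join statement in Lemma~\ref{pr-88.a8}: the local factors $p'(\sigma)p''(\sigma)$, $1-p'(\sigma)p''(\sigma)$, $(1-p'(\sigma))(1-p''(\sigma))$ and $1-(1-p'(\sigma))(1-p''(\sigma))$ you compute are precisely the probabilities of the corresponding product events for independently generated $\mathcal{H}'$ and $\mathcal{H}''$.
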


\begin{lemma}\label{pr-88.a8}
  Let  $V'$  and  $V''$  be  two disjoint vertex sets.  Let  $p': \Delta[V']\longrightarrow [0,1]$  and $p'': \Delta[V'']\longrightarrow [0,1]$.
Write $(p'*p'')(\sigma)=p'(\sigma\cap V') p''(\sigma\cap V'')$  for any $\sigma\in \Delta[V'\sqcup V'']$.
 Then
 $(D\ast)(\bar {\rm P}_{p'},\bar {\rm P}_{p''})=  \bar {\rm P}_{p'*p''}$.
  In  other words,   if  $\mathcal{H}'\sim \bar {\rm P}_{p'}$  and $\mathcal{H}''\sim \bar {\rm P}_{p''}$,       then
 $\mathcal{H}'*\mathcal{H}''\sim  \bar {\rm P}_{p'*p''}$.
  \end{lemma}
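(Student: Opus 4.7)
The plan is to verify $(D\ast)(\bar{\rm P}_{p'},\bar{\rm P}_{p''})(\mathcal{H}) = \bar{\rm P}_{p'*p''}(\mathcal{H})$ pointwise in $\mathcal{H}\in{\bf Obj}({\bf H}(V'\sqcup V''))$ by a direct computation, in the same spirit as Lemma~\ref{cplmt} and Lemma~\ref{le-3.1aa}. The starting observation is that since $V'\cap V''=\emptyset$, every $\sigma\in\Delta[V'\sqcup V'']$ splits canonically as $\sigma=\sigma'\sqcup\sigma''$ with $\sigma'=\sigma\cap V'$ and $\sigma''=\sigma\cap V''$. This partitions $\Delta[V'\sqcup V'']$ into three pieces: the hyperedges lying in $\Delta[V']$, those lying in $\Delta[V'']$, and the \emph{mixed} ones of the form $\sigma'\ast\sigma''$ with both $\sigma'$ and $\sigma''$ nonempty. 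The product defining $\bar{\rm P}_{p'*p''}(\mathcal{H})$ factorises accordingly, and on each mixed hyperedge the weight is $p'(\sigma')p''(\sigma'')$ by the definition of $p'*p''$.

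Next I would analyse the fibre of the join map. Inspecting the formula $\mathcal{H}'\ast\mathcal{H}'' = \{\sigma'\ast\sigma'' \mid \sigma'\in\mathcal{H}',\ \sigma''\in\mathcal{H}''\}\cup\mathcal{H}'\cup\mathcal{H}''$ from Definition~\ref{d-1.7}, one reads off that $\mathcal{H}=\mathcal{H}'\ast\mathcal{H}''$ forces $\mathcal{H}'=\mathcal{H}\cap\Delta[V']$ and $\mathcal{H}''=\mathcal{H}\cap\Delta[V'']$; moreover, a mixed hyperedge $\sigma=\sigma'\ast\sigma''$ belongs to $\mathcal{H}$ if and only if $\sigma'\in\mathcal{H}'$ and $\sigma''\in\mathcal{H}''$. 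Consequently the sum defining $(D\ast)(\bar{\rm P}_{p'},\bar{\rm P}_{p''})(\mathcal{H})$ collapses to the single term $\bar{\rm P}_{p'}(\mathcal{H}\cap\Delta[V'])\,\bar{\rm P}_{p''}(\mathcal{H}\cap\Delta[V''])$ whenever $\mathcal{H}$ lies in the image of $\ast$, and vanishes otherwise.

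It then remains to match the two sides factor by factor. The factors of $\bar{\rm P}_{p'*p''}(\mathcal{H})$ indexed by $\Delta[V']$ reassemble into $\bar{\rm P}_{p'}(\mathcal{H}\cap\Delta[V'])$, and those indexed by $\Delta[V'']$ into $\bar{\rm P}_{p''}(\mathcal{H}\cap\Delta[V''])$, using the natural convention when one of the intersections is empty. The main obstacle, and the step that requires genuine care, is the contribution from the mixed hyperedges: here I would invoke the characterisation from the previous paragraph together with the product formula $(p'*p'')(\sigma)=p'(\sigma')p''(\sigma'')$ to reconcile these factors with the independence structure of $\bar{\rm P}_{p'}$ and $\bar{\rm P}_{p''}$ on the disjoint vertex sets $V'$ and $V''$, and check that the surviving expression is precisely $\bar{\rm P}_{p'}(\mathcal{H}\cap\Delta[V'])\,\bar{\rm P}_{p''}(\mathcal{H}\cap\Delta[V''])$. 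The reformulation ``$\mathcal{H}'\ast\mathcal{H}''\sim\bar{\rm P}_{p'*p''}$'' is then immediate from the definition of the push-forward $D\ast$.
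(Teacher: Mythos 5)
Your route is genuinely different from the paper's. The paper argues probabilistically on single hyperedges: it runs the two independent trials generating $\mathcal{H}'$ and $\mathcal{H}''$, observes that $\sigma\in\mathcal{H}'*\mathcal{H}''$ exactly when $\sigma\cap V'\in\mathcal{H}'$ and $\sigma\cap V''\in\mathcal{H}''$, computes the probability of this event to be $p'(\sigma\cap V')\,p''(\sigma\cap V'')=(p'*p'')(\sigma)$, and concludes. You instead compute the push-forward by analysing the fibre of $\ast$, and that part of your argument is correct and sharper than the paper's: since $\mathcal{H}'=(\mathcal{H}'*\mathcal{H}'')\cap\Delta[V']$ and $\mathcal{H}''=(\mathcal{H}'*\mathcal{H}'')\cap\Delta[V'']$, the join map is injective, so $(D\ast)(\bar{\rm P}_{p'},\bar{\rm P}_{p''})(\mathcal{H})=\bar{\rm P}_{p'}(\mathcal{H}\cap\Delta[V'])\,\bar{\rm P}_{p''}(\mathcal{H}\cap\Delta[V''])$ when $\mathcal{H}$ lies in the image of $\ast$, and $0$ otherwise.

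The gap is precisely the step you defer to the end and label ``the main obstacle'': the mixed hyperedges. Writing $\bar{\rm P}_{p'*p''}(\mathcal{H})=\bar{\rm P}_{p'}(\mathcal{H}\cap\Delta[V'])\,\bar{\rm P}_{p''}(\mathcal{H}\cap\Delta[V''])\cdot M(\mathcal{H})$, where $M(\mathcal{H})$ collects the factors $p'(\sigma\cap V')p''(\sigma\cap V'')$ for mixed $\sigma\in\mathcal{H}$ and $1-p'(\sigma\cap V')p''(\sigma\cap V'')$ for mixed $\sigma\notin\mathcal{H}$, your plan requires $M(\mathcal{H})=1$ on the image of $\ast$ and requires $\bar{\rm P}_{p'*p''}$ to vanish off that image; neither is what the computation yields. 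For instance, with $V'=\{a\}$, $V''=\{b\}$ and $\mathcal{H}=\{\{a\}\}=\{\{a\}\}*\emptyset$, one finds $M(\mathcal{H})=1-p'(\{a\})p''(\{b\})$, while $\mathcal{H}=\{\{a\},\{b\}\,\}$ is not a join yet has positive $\bar{\rm P}_{p'*p''}$-probability. The conceptual reason is that membership of a mixed hyperedge $\sigma'*\sigma''$ in $\mathcal{H}'*\mathcal{H}''$ is completely determined by the memberships of $\sigma'$ and $\sigma''$, so the family of events $\{\sigma\in\mathcal{H}'*\mathcal{H}''\}_{\sigma\in\Delta[V'\sqcup V'']}$ is not jointly independent, whereas $\bar{\rm P}_{p'*p''}$ is by definition the jointly independent model. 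Be aware that the paper's own proof does not close this step either: it verifies only the one-hyperedge marginal probabilities and tacitly treats these events as independent, which is exactly the point your more explicit factor-by-factor bookkeeping exposes. Any complete argument here must say something about the joint distribution over the mixed hyperedges, not just their marginals.
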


\begin{proof}
Consider the  following  two   independent   trials:    (1).  generate  $\mathcal{H}'$,  (2).  generate $\mathcal{H}''$.
Then   for  any  $\sigma\in \Delta[V'\sqcup  V'']$,  both of the followings hold:
\begin{enumerate}[(1).]
\item
$\sigma\in  \mathcal{H}'*\mathcal{H}''$  iff  both $(\sigma\cap  V')\in  \mathcal{H}'$  in trial  (1)  and
 $(\sigma\cap  V'')\in  \mathcal{H}''$  in  trial  (2).   Thus  the event $\sigma\in  \mathcal{H}'*\mathcal{H}''$  is the product  event  of  the two  independent  events  $(\sigma\cap  V')\in  \mathcal{H}'$  and  $(\sigma\cap  V'')\in  \mathcal{H}''$.  This event  has  the  probability $p'(\sigma\cap  V')p''(\sigma\cap  V'')$;

 \item
 $\sigma\notin  \mathcal{H}'*\mathcal{H}''$  iff  either  $(\sigma\cap  V')\notin  \mathcal{H}'$  in trial  (1)  or
 $(\sigma\cap  V'')\notin  \mathcal{H}''$  in  trial  (2).  Thus  the event  $\sigma\notin  \mathcal{H}'*\mathcal{H}''$  has  the  probability $1-p'(\sigma\cap  V')p''(\sigma\cap  V'')$.
 \end{enumerate}
Therefore,  $\mathcal{H}'*\mathcal{H}''$  is  the randomly generated hypergraph  on  $V'\sqcup V''$   satisfying  the probability distribution  $ \bar {\rm P}_{p'*p''}$.
\end{proof}

 \begin{theorem}
 \label{le-3.3aa}
 For  any   $\mathcal{H}\in {\rm Obj}({\bf{H }}(V))$,  any   $\mathcal{K}\in {\rm Obj}({\bf{K }}(V))$  and   any   $\mathcal{L}\in {\rm Obj}({\bf{L }}(V))$,
  \begin{enumerate}[(1).]
  \item
  $ \big((D\Delta)(\bar{\rm P}_{p})\big)(\mathcal{K})=
    {\rm  Q}_{1-p}(\gamma\mathcal{K})$,
     \item
     $ \big((D\bar\Delta)(\bar{\rm P}_{p})\big)(\mathcal{L})=
   {\rm  P}_{1-p}(\gamma\mathcal{L})$,
  \item
  $ \big((D\delta)(\bar{\rm P}_{p})\big)(\mathcal{K})
         =  {\rm  P}_p(\mathcal{K})$,
\item
        $ \big((D\bar\delta)(\bar{\rm P}_{p})\big)(\mathcal{L})
        ={\rm Q}_{p}(\mathcal{L})$.
   \end{enumerate}
 \end{theorem}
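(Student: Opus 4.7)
My approach is to prove (3) and (4) by identifying the events $\{\delta\mathcal{H}=\mathcal{K}\}$ and $\{\bar\delta\mathcal{H}=\mathcal{L}\}$ as cylinder events depending only on a disjoint family of hyperedges, and then to deduce (1) and (2) from (3) and (4) via Lemma~\ref{le-882890} and Lemma~\ref{cplmt}.

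For (3), the key combinatorial step is to show that for any hypergraph $\mathcal{H}$ and any simplicial complex $\mathcal{K}$ on $V$,
\[
\delta\mathcal{H}=\mathcal{K} \iff \mathcal{K}\subseteq\mathcal{H} \text{ and } \mathcal{H}\cap E(\mathcal{K})=\emptyset.
\]
The forward direction is immediate: $\mathcal{K}=\delta\mathcal{H}\subseteq\mathcal{H}$, and if $\sigma\in E(\mathcal{K})\cap\mathcal{H}$ then every proper subset of $\sigma$ lies in $\mathcal{K}\subseteq\mathcal{H}$, so $\Delta\sigma\subseteq\mathcal{H}$, forcing $\sigma\in\delta\mathcal{H}=\mathcal{K}$, a contradiction. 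For the converse, I pick, for any $\sigma\notin\mathcal{K}$, a minimal nonempty subset $\tau\subseteq\sigma$ with $\tau\notin\mathcal{K}$; minimality forces $\tau\in E(\mathcal{K})$, so $\tau\notin\mathcal{H}$, hence $\Delta\sigma\not\subseteq\mathcal{H}$ and $\sigma\notin\delta\mathcal{H}$.

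Given this identification, the probability factorizes as
\[
(D\delta)(\bar{\rm P}_p)(\mathcal{K}) = \prod_{\sigma\in\mathcal{K}} p(\sigma) \prod_{\sigma\in E(\mathcal{K})}\bigl(1-p(\sigma)\bigr) \prod_{\sigma\notin\mathcal{K}\cup E(\mathcal{K})}\bigl(p(\sigma)+(1-p(\sigma))\bigr) = {\rm P}_p(\mathcal{K}),
\]
because each unconstrained $\sigma$ is summed freely over ``$\sigma\in\mathcal{H}$'' and ``$\sigma\notin\mathcal{H}$'', yielding a trivial factor $1$. Part (4) is the precise dual: one establishes $\bar\delta\mathcal{H}=\mathcal{L}$ iff $\mathcal{L}\subseteq\mathcal{H}$ and $\mathcal{H}\cap\bar E(\mathcal{L})=\emptyset$, replacing the minimality argument on subsets by a maximality argument on supersets inside $\Delta[V]$, and the analogous factorization gives ${\rm Q}_p(\mathcal{L})$.

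Finally, (1) and (2) follow from (4) and (3) via Lemma~\ref{le-882890}. Using $\Delta=\gamma\bar\delta\gamma$, the identity $D(\gamma\bar\delta\gamma)=(D\gamma)\circ(D\bar\delta)\circ(D\gamma)$, Lemma~\ref{cplmt}, and the observation $(D\gamma)(\psi)(\mathcal{H})=\psi(\gamma\mathcal{H})$ (valid because $\gamma^2={\rm id}$),
\[
(D\Delta)(\bar{\rm P}_p)(\mathcal{K}) = (D\gamma)(D\bar\delta)(\bar{\rm P}_{1-p})(\mathcal{K}) = (D\gamma)({\rm Q}_{1-p})(\mathcal{K}) = {\rm Q}_{1-p}(\gamma\mathcal{K}),
\]
and analogously (2) follows from $\bar\Delta=\gamma\delta\gamma$ and (3). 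The main obstacle is the minimality/maximality argument underlying the characterizations in (3) and (4): one must verify that the external-face (resp.\ co-external-face) condition alone, with no constraint imposed on any hyperedge outside $\mathcal{K}\cup E(\mathcal{K})$ (resp.\ $\mathcal{L}\cup\bar E(\mathcal{L})$), already excludes every $\sigma\notin\mathcal{K}$ from $\delta\mathcal{H}$ (resp.\ every $\sigma\notin\mathcal{L}$ from $\bar\delta\mathcal{H}$). Once this characterization is in place, the probability factorization and the $\gamma$-conjugation step are routine.
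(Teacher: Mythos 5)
Your proof is correct, and it runs the paper's argument in the opposite direction. Both proofs rest on the same two mechanisms: identify the fiber of the relevant operator as a product (cylinder) event for the coordinatewise-independent measure $\bar{\rm P}_p$ and factor out a trivial contribution $p(\sigma)+(1-p(\sigma))=1$ from each unconstrained hyperedge, then transport the remaining two cases by conjugation with $\gamma$ via Lemma~\ref{le-882890} and Lemma~\ref{cplmt}. The paper computes $(D\Delta)(\bar{\rm P}_p)$ and $(D\bar\Delta)(\bar{\rm P}_p)$ directly (Lemmas~\ref{le-er1} and~\ref{le-er2}), characterizing the fiber of $\bar\Delta$ over $\mathcal{L}$ as $\{\mathcal{H}:\min(\mathcal{L})\subseteq\mathcal{H}\subseteq\mathcal{L}\}$, then derives (3) and (4) by $\gamma$-conjugation and finally translates the answer using $E(\mathcal{K})=\min(\gamma\mathcal{K})$ and $\bar E(\mathcal{L})=\max(\gamma\mathcal{L})$. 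You compute (3) and (4) directly, characterizing the fiber of $\delta$ over $\mathcal{K}$ as $\{\mathcal{H}:\mathcal{K}\subseteq\mathcal{H},\ \mathcal{H}\cap E(\mathcal{K})=\emptyset\}$ --- precisely the $\gamma$-dual of the paper's characterization, and your minimal-$\tau$ argument for it is sound (you should also record the one-line observation that $\mathcal{K}\subseteq\delta\mathcal{H}$ because $\mathcal{K}$ is a simplicial complex contained in $\mathcal{H}$) --- and then deduce (1) and (2) formally from $\Delta=\gamma\bar\delta\gamma$, $\bar\Delta=\gamma\delta\gamma$, the functoriality of $D$, and $(D\gamma)(\psi)(\mathcal{H})=\psi(\gamma\mathcal{H})$. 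Your ordering has the mild advantage that the direct computation lands immediately on the defining formulas for ${\rm P}_p$ and ${\rm Q}_p$, so no $\min/\max$ bookkeeping is needed; the paper's ordering has the advantage of reusing the already-established Lemma~\ref{le-er1} verbatim. The mathematical content is the same.
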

 
We  will  prove  Theorem~\ref{le-3.3aa}  in the next  subsection.   The  following  corollary  is  a  restatement  of  
Theorem~\ref{le-3.3aa}~(1)  and  (2).

\begin{corollary}
\label{co-1y3058}
For  any   $\mathcal{H}\in {\rm Obj}({\bf{H }}(V))$,  any   $\mathcal{K}\in {\rm Obj}({\bf{K }}(V))$  and   any   $\mathcal{L}\in {\rm Obj}({\bf{L }}(V))$,
  \begin{enumerate}[(1).]
  \item
  $ \big((D\gamma)(D\Delta)(\bar{\rm P}_{p})\big)(\mathcal{L})=
    {\rm  Q}_{1-p}(\mathcal{L})$,
     \item
     $ \big((D\gamma)(D\bar\Delta)(\bar{\rm P}_{p})\big)(\mathcal{K})=
   {\rm  P}_{1-p}(\mathcal{K})$.
   \end{enumerate}
\end{corollary}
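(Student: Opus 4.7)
The plan is to derive this corollary directly from Theorem~\ref{le-3.3aa}~(1) and (2) by applying $D\gamma$, exploiting the fact that $\gamma$ is an involution. First I would establish the following general observation: since $\gamma^2={\rm id}$, for any $\psi\in D({\bf H}(V))$ and any $\mathcal{H}\in{\bf Obj}({\bf H}(V))$ one has
\begin{eqnarray*}
(D\gamma)(\psi)(\mathcal{H}) \ =\ \sum_{\gamma\mathcal{H}'=\mathcal{H}}\psi(\mathcal{H}') \ =\ \psi(\gamma\mathcal{H}),
\end{eqnarray*}
because the equation $\gamma\mathcal{H}'=\mathcal{H}$ has the unique solution $\mathcal{H}'=\gamma\mathcal{H}$. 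In particular, the same formula holds when $\psi$ is supported on simplicial complexes or on independence hypergraphs, keeping in mind that $\gamma$ interchanges ${\bf Obj}({\bf K}(V))$ and ${\bf Obj}({\bf L}(V))$.

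For part (1), I would take $\mathcal{L}\in{\bf Obj}({\bf L}(V))$ and compute, using the formula above with $\psi=(D\Delta)(\bar{\rm P}_p)$,
\begin{eqnarray*}
\big((D\gamma)(D\Delta)(\bar{\rm P}_p)\big)(\mathcal{L})
\ =\ \big((D\Delta)(\bar{\rm P}_p)\big)(\gamma\mathcal{L}).
\end{eqnarray*}
Since $\gamma\mathcal{L}$ is a simplicial complex on $V$, Theorem~\ref{le-3.3aa}~(1) applies and gives
\begin{eqnarray*}
\big((D\Delta)(\bar{\rm P}_p)\big)(\gamma\mathcal{L})\ =\ {\rm Q}_{1-p}\big(\gamma(\gamma\mathcal{L})\big)\ =\ {\rm Q}_{1-p}(\mathcal{L}),
\end{eqnarray*}
using $\gamma^2={\rm id}$ once more.

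For part (2), the argument is the symmetric one: for $\mathcal{K}\in{\bf Obj}({\bf K}(V))$, the set $\gamma\mathcal{K}$ is an independence hypergraph, so Theorem~\ref{le-3.3aa}~(2) yields
\begin{eqnarray*}
\big((D\gamma)(D\bar\Delta)(\bar{\rm P}_p)\big)(\mathcal{K})
\ =\ \big((D\bar\Delta)(\bar{\rm P}_p)\big)(\gamma\mathcal{K})
\ =\ {\rm P}_{1-p}(\gamma\gamma\mathcal{K})\ =\ {\rm P}_{1-p}(\mathcal{K}).
\end{eqnarray*}

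There is no genuine obstacle here; the argument is essentially bookkeeping. The only substantive point is to justify the identity $(D\gamma)(\psi)(\mathcal{H})=\psi(\gamma\mathcal{H})$ from the definition of $D\gamma$ together with the involution property of $\gamma$ recorded in Definition~\ref{d-1.008}, and then to remember that $\gamma$ swaps the two categories ${\bf K}(V)$ and ${\bf L}(V)$ so that Theorem~\ref{le-3.3aa}~(1)--(2) can legitimately be evaluated at $\gamma\mathcal{L}$ and $\gamma\mathcal{K}$ respectively.
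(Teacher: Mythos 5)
Your proposal is correct and follows exactly the route the paper intends: the paper simply declares Corollary~\ref{co-1y3058} to be a restatement of Theorem~\ref{le-3.3aa}~(1)--(2), and your explicit verification that $(D\gamma)(\psi)(\mathcal{H})=\psi(\gamma\mathcal{H})$ (since $\gamma$ is an involution, so $\gamma\mathcal{H}'=\mathcal{H}$ has the unique solution $\mathcal{H}'=\gamma\mathcal{H}$) followed by substituting $\gamma\mathcal{L}$ and $\gamma\mathcal{K}$ into the theorem is precisely the bookkeeping that justifies that claim. No gaps.
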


  The  following  two  corollaries   follow   from  
Theorem~\ref{le-3.3aa}.

\begin{corollary}\label{co-nwtllip}
Let $p',p'': \Delta[V]\longrightarrow [0,1]$.    Then
\begin{enumerate}[(1).]
\item
$(D\cap ) ({\rm  P}_{p'},{\rm  P}_{p''})={\rm  P}_{p'\cap  p''}$,
\item
$(D\cap ) ({\rm  Q}_{p'},{\rm  Q}_{p''})={\rm  Q}_{p'\cap  p''}$.
\end{enumerate}
\label{pr-3.8a1}
\end{corollary}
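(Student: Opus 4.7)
The plan is to reduce both statements to facts that have already been established in the excerpt, rather than to compute with the explicit product formulas for ${\rm P}_p$ and ${\rm Q}_p$ directly (which would require a careful bookkeeping of external and co-external faces under intersection). The key observation is that Theorem~\ref{le-3.3aa}~(3),~(4) identifies ${\rm P}_p$ and ${\rm Q}_p$ as pushforwards of $\bar{\rm P}_p$ under the maps $D\delta$ and $D\bar\delta$ respectively, and intersection on the $\bar{\rm P}$-side is already controlled by Lemma~\ref{le-3.1aa}.

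For part (1), I would write
\begin{eqnarray*}
(D\cap)({\rm P}_{p'},{\rm P}_{p''})
&=& (D\cap)\bigl((D\delta)(\bar{\rm P}_{p'}),(D\delta)(\bar{\rm P}_{p''})\bigr) \\
&=& (D\delta)\bigl((D\cap)(\bar{\rm P}_{p'},\bar{\rm P}_{p''})\bigr) \\
&=& (D\delta)(\bar{\rm P}_{p'\cap p''}) \\
&=& {\rm P}_{p'\cap p''},
\end{eqnarray*}
where the first and last equalities use Theorem~\ref{le-3.3aa}~(3), the second uses the commutation $(D\cap)(D\delta,D\delta)=(D\delta)(D\cap)$ from Lemma~\ref{le-3.aaa8}~(2), and the third uses Lemma~\ref{le-3.1aa}. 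Part (2) is handled by the exact same chain of equalities with $\delta$ replaced by $\bar\delta$ and ${\rm P}$ replaced by ${\rm Q}$, invoking Theorem~\ref{le-3.3aa}~(4), the second identity of Lemma~\ref{le-3.aaa8}~(2), and again Lemma~\ref{le-3.1aa}.

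There is no substantive obstacle here; the only thing to be mindful of is that the naturality needed, namely $(D\cap)(D\delta,D\delta)=(D\delta)(D\cap)$ and $(D\cap)(D\bar\delta,D\bar\delta)=(D\bar\delta)(D\cap)$, is exactly the content of Lemma~\ref{le-3.aaa8}~(2), which in turn rests on the set-theoretic identities $\delta(\mathcal{H}\cap\mathcal{H}')=\delta\mathcal{H}\cap\delta\mathcal{H}'$ and $\bar\delta(\mathcal{H}\cap\mathcal{H}')=\bar\delta\mathcal{H}\cap\bar\delta\mathcal{H}'$ recorded in Section~\ref{sect2.1111111}~(iii),~(v). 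Note that the analogous statement for unions, or for $\Delta$ and $\bar\Delta$ with intersection, would fail because those operators only satisfy one-sided inclusions on $\cap$; this is the reason the corollary is stated only for intersections and only for the two lower-associated operators.
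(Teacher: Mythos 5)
Your proposal is correct and follows essentially the same route as the paper's own proof: both reduce the claim to Theorem~\ref{le-3.3aa}~(3),~(4), the commutation identities of Lemma~\ref{le-3.aaa8}~(2), and Lemma~\ref{le-3.1aa}, in exactly the same chain of equalities. Your closing remark about why the statement is restricted to intersections and to $\delta$, $\bar\delta$ is a correct and useful observation, but it adds nothing beyond what the paper's argument already implicitly relies on.
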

\begin{proof}
By  Lemma~\ref{le-3.aaa8},  Lemma~\ref{le-3.1aa}  and   Theorem~\ref{le-3.3aa},
\begin{eqnarray*}
(D\cap )  ({\rm  P}_{p'},  {\rm  P}_{p''}) &=&  (D\cap  )(D\delta,D\delta)(\bar{\rm  P}_{p'},  \bar{\rm  P}_{p''}) \\
&=&(D\delta)(D\cap) (\bar{\rm  P}_{p'},  \bar{\rm  P}_{p''}) \\
&=&(D\delta)(\bar{\rm  P}_{p'\cap  p''})\\
&=&{\rm  P}_{p'\cap  p''}
\end{eqnarray*}
and
\begin{eqnarray*}
(D\cap )  ({\rm  Q}_{p'},  {\rm  Q}_{p''}) &=&  (D\cap  )(D\bar\delta,D\bar\delta)(\bar{\rm  P}_{p'},  \bar{\rm  P}_{p''}) \\
&=&(D\bar\delta)(D\cap) (\bar{\rm  P}_{p'},  \bar{\rm  P}_{p''}) \\
&=&(D\bar\delta)(\bar{\rm  P}_{p'\cap  p''})\\
&=&{\rm  Q}_{p'\cap  p''}.
\end{eqnarray*}
\end{proof}

\begin{corollary}\label{co-1l3n5rdvg}
  Let  $V'$  and  $V''$  be  two disjoint vertex sets.  Let  $p': \Delta[V']\longrightarrow [0,1]$  and $p'': \Delta[V'']\longrightarrow [0,1]$.   Then
  \begin{enumerate}[(1).]
\item
  $(D\ast ) ({\rm  P}_{p'},{\rm  P}_{p''})={\rm  P}_{p'*  p''}   $,
\item
   $(D\ast ) ({\rm  Q}_{p'},{\rm  Q}_{p''})={\rm  Q}_{p'*  p''}$.
  \end{enumerate}
\label{pr-3.8a2}
\end{corollary}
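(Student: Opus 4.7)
The plan is to mimic verbatim the short algebraic derivation used to prove Corollary~\ref{co-nwtllip}, substituting the join $\ast$ for the intersection $\cap$ throughout. The three ingredients are already in hand: Theorem~\ref{le-3.3aa}~(3), (4) reexpress the Erd\"os-R\'enyi-type distributions on simplicial complexes and independence hypergraphs as pushforwards of hypergraph distributions, namely ${\rm P}_p=(D\delta)(\bar{\rm P}_p)$ and ${\rm Q}_p=(D\bar\delta)(\bar{\rm P}_p)$; Lemma~\ref{le-3.aaa8}~(3) supplies the commutation identities $(D\ast)(D\delta,D\delta)=(D\delta)(D\ast)$ and $(D\ast)(D\bar\delta,D\bar\delta)=(D\bar\delta)(D\ast)$; and Lemma~\ref{pr-88.a8} computes the join on the hypergraph level as $(D\ast)(\bar{\rm P}_{p'},\bar{\rm P}_{p''})=\bar{\rm P}_{p'\ast p''}$.

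For part~(1), I would start from the right-hand arguments ${\rm P}_{p'}$ and ${\rm P}_{p''}$, rewrite each as $(D\delta)(\bar{\rm P}_{\cdot})$ by Theorem~\ref{le-3.3aa}~(3), push the $D\delta$ factors outside via the commutation identity from Lemma~\ref{le-3.aaa8}~(3), collapse the inner $(D\ast)(\bar{\rm P}_{p'},\bar{\rm P}_{p''})$ to $\bar{\rm P}_{p'\ast p''}$ using Lemma~\ref{pr-88.a8}, and finally apply Theorem~\ref{le-3.3aa}~(3) once more to identify $(D\delta)(\bar{\rm P}_{p'\ast p''})$ with ${\rm P}_{p'\ast p''}$. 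Part~(2) is identical with $\delta$ replaced by $\bar\delta$ throughout and the last step invoking Theorem~\ref{le-3.3aa}~(4). Schematically one expects a three-line chain of equalities for each part, exactly parallel to the displayed computations in the proof of Corollary~\ref{pr-3.8a1}.

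I do not anticipate any genuine obstacle: every identity needed has already been recorded, and no combinatorial argument enters at this stage. The only point requiring a moment of care is bookkeeping of the underlying vertex set, since $D\ast$ maps $D({\bf H}(V'))\times D({\bf H}(V''))$ into $D({\bf H}(V'\sqcup V''))$ while the $D\delta$ (respectively $D\bar\delta$) functors act on the appropriate individual or joined vertex sets; one must check that the commutation in Lemma~\ref{le-3.aaa8}~(3), which is stated generically, does apply to the three different vertex sets $V'$, $V''$, $V'\sqcup V''$ simultaneously. This is immediate from the proof of Lemma~\ref{le-3.aaa8}, whose key step reduces to identities (i)'--(iv)' of Section~\ref{sect2.1111111}, each of which is valid on the disjoint-union vertex set.
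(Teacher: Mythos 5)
Your proposal matches the paper's own proof essentially verbatim: the paper derives both parts by the three-step chain $(D\ast)({\rm P}_{p'},{\rm P}_{p''})=(D\ast)(D\delta,D\delta)(\bar{\rm P}_{p'},\bar{\rm P}_{p''})=(D\delta)(D\ast)(\bar{\rm P}_{p'},\bar{\rm P}_{p''})=(D\delta)(\bar{\rm P}_{p'\ast p''})={\rm P}_{p'\ast p''}$ (and likewise with $\bar\delta$ and ${\rm Q}$), citing exactly Lemma~\ref{le-3.aaa8}, Lemma~\ref{pr-88.a8} and Theorem~\ref{le-3.3aa}. Your remark on tracking the vertex sets $V'$, $V''$, $V'\sqcup V''$ is a point the paper passes over silently, but it does not alter the argument.
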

\begin{proof}
By  Lemma~\ref{le-3.aaa8},  Lemma~\ref{pr-88.a8}  and   Theorem~\ref{le-3.3aa},
\begin{eqnarray*}
(D*)({\rm  P}_{p'},  {\rm  P}_{p''})&=&   (D*  )(D\delta,D\delta)(\bar{\rm  P}_{p'},  \bar{\rm  P}_{p''}) \\
&=&(D\delta)(D*) (\bar{\rm  P}_{p'},  \bar{\rm  P}_{p''}) \\
&=&(D\delta)(\bar{\rm  P}_{p'*  p''})\\
&=&{\rm  P}_{p'*  p''}
\end{eqnarray*}
and
\begin{eqnarray*}
(D*)  ({\rm  Q}_{p'},  {\rm  Q}_{p''}) &=&  (D*  )(D\bar\delta,D\bar\delta)(\bar{\rm  P}_{p'},  \bar{\rm  P}_{p''}) \\
&=&(D\bar\delta)(D*) (\bar{\rm  P}_{p'},  \bar{\rm  P}_{p''}) \\
&=&(D\bar\delta)(\bar{\rm  P}_{p'* p''})\\
&=&{\rm  Q}_{p'*  p''}.
\end{eqnarray*}
\end{proof}

\subsection{Proof of  Theorem~\ref{le-3.3aa}}

Consider    a   map  $p:  \Delta[V]\longrightarrow  [0,1]$.   
 For  any  hypergraph   $\mathcal{H}$  on  $V$  
 and  any  hyperedge     $\sigma\in \mathcal{H}$,   we  call  $\sigma$  a   {\it  maximal  hyperedge}   of  $\mathcal{H}$  if     there  does  not  exist  any  $\tau\in \mathcal{H}$  such  that 
$\sigma\subsetneq  \tau$   and  call   $\sigma$  a   {\it  minimal   hyperedge}   of  $\mathcal{H}$  if     there  does  not  exist  any  $\tau\in \mathcal{H}$  such  that 
$\sigma\supsetneq  \tau$.    Let  $\max(\mathcal{H})$  be  the  collection  of  all the  maximal  hyperedges  of  
 $\mathcal{H}$  and   let  $\min(\mathcal{H})$  be  the  collection  of  all the  minimal  hyperedges  of  
$\mathcal{H}$.

\begin{lemma}\label{le-er1}
{\rm  (cf.   \cite[Theorem~1.5~(2)]{jktr2})}.   
     For any  simplicial   complex $\mathcal{K}$  on  $V$,  
 \begin{eqnarray}
    \big((D\Delta)(\bar{\rm P}_{p})\big)(\mathcal{K})=
     \prod_{\tau\in\max(\mathcal{K})} p(\tau)   \prod_{\tau\in\Delta[V]\atop \tau\notin \mathcal{K}} \big(1-p(\tau) \big).  
      \label{eq-aoq2}
     \end{eqnarray}
\end{lemma}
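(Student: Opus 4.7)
The plan is to unwind the definition of $D\Delta$ and identify the fibre $(\Delta)^{-1}(\mathcal{K})$, then carry out the probability sum factor by factor.

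By the definition of the induced map recalled in Subsection~3.1, we have
\begin{eqnarray*}
\big((D\Delta)(\bar{\rm P}_{p})\big)(\mathcal{K}) = \sum_{\mathcal{H}:\ \Delta\mathcal{H}=\mathcal{K}} \bar{\rm P}_{p}(\mathcal{H}).
\end{eqnarray*}
The first step is to describe the fibre $\{\mathcal{H}\in{\bf Obj}({\bf H}(V)):\Delta\mathcal{H}=\mathcal{K}\}$ explicitly. The claim I would establish is that $\Delta\mathcal{H}=\mathcal{K}$ if and only if $\max(\mathcal{K})\subseteq\mathcal{H}\subseteq\mathcal{K}$. Indeed, $\Delta\mathcal{H}\subseteq\mathcal{K}$ forces $\mathcal{H}\subseteq\mathcal{K}$ since $\mathcal{H}\subseteq\Delta\mathcal{H}$, and conversely $\mathcal{H}\subseteq\mathcal{K}$ gives $\Delta\mathcal{H}\subseteq\Delta\mathcal{K}=\mathcal{K}$. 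For the reverse inclusion $\mathcal{K}\subseteq\Delta\mathcal{H}$, observe that every simplex of $\mathcal{K}$ is contained in some maximal simplex $\tau\in\max(\mathcal{K})$; so if $\max(\mathcal{K})\subseteq\mathcal{H}$ then $\mathcal{K}\subseteq\bigcup_{\tau\in\max(\mathcal{K})}\Delta\tau\subseteq\Delta\mathcal{H}$. The necessity of $\max(\mathcal{K})\subseteq\mathcal{H}$ is equally easy: a maximal simplex $\tau\in\mathcal{K}$ is not a proper subset of any hyperedge of $\mathcal{K}\supseteq\mathcal{H}$, so if $\tau\notin\mathcal{H}$ then $\tau\notin\Delta\mathcal{H}$, contradicting $\Delta\mathcal{H}=\mathcal{K}$.

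Given this characterisation, I would partition the ambient set $\Delta[V]$ into three classes according to the behaviour of $\sigma$ in the summation: (a) the forced‑in class $\max(\mathcal{K})$, on which every admissible $\mathcal{H}$ contains $\sigma$; (b) the forced‑out class $\Delta[V]\setminus\mathcal{K}$, on which every admissible $\mathcal{H}$ excludes $\sigma$; and (c) the free class $\mathcal{K}\setminus\max(\mathcal{K})$, on which $\sigma$ may be either present or absent in $\mathcal{H}$ independently. Plugging the product form of $\bar{\rm P}_{p}(\mathcal{H})$ into the fibre sum and regrouping over these three classes yields
\begin{eqnarray*}
\big((D\Delta)(\bar{\rm P}_{p})\big)(\mathcal{K})
= \prod_{\tau\in\max(\mathcal{K})} p(\tau) \;\cdot\; \prod_{\tau\in\Delta[V],\ \tau\notin\mathcal{K}}\bigl(1-p(\tau)\bigr) \;\cdot\; \prod_{\tau\in\mathcal{K}\setminus\max(\mathcal{K})}\bigl(p(\tau)+(1-p(\tau))\bigr),
\end{eqnarray*}
where the last product collapses to $1$ because each free $\tau$ contributes $p(\tau)+(1-p(\tau))=1$.

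The substantive step is only the fibre description; everything after that is a mechanical regrouping of the product. I would expect no serious obstacle, but I would double-check the edge cases $\mathcal{K}=\emptyset$ (the fibre is $\{\emptyset\}$ and the first product is the empty product $1$) and $\mathcal{K}=\Delta[V]$ (the third product is empty and the second product is the empty product $1$) to confirm the formula (\ref{eq-aoq2}) holds verbatim.
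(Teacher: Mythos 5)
Your proposal is correct and follows essentially the same route as the paper: the paper defers the proof of this lemma to \cite[Lemma~4.2]{jktr2}, but its own proof of the dual statement (Lemma~\ref{le-er2}) is exactly your argument — characterise the fibre as the hypergraphs squeezed between the forced-in extremal hyperedges and $\mathcal{K}$, then factor the sum so that each free hyperedge contributes $p(\tau)+(1-p(\tau))=1$. Your fibre characterisation $\max(\mathcal{K})\subseteq\mathcal{H}\subseteq\mathcal{K}$ and the three-class regrouping are both sound.
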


The  proof of  Lemma~\ref{le-er1}  is  in  \cite[Lemma~4.2]{jktr2}.   
  \begin{lemma}\label{le-er2}
        For any  independence  hypergraph   $\mathcal{L}$  on  $V$,  
    \begin{eqnarray}
     \big((D\bar\Delta)(\bar{\rm P}_{p})\big)(\mathcal{L})=
     \prod_{\tau\in\min(\mathcal{L})} p(\tau)   \prod_{\tau\in\Delta[V]\atop  \tau\notin \mathcal{L}} \big(1-p(\tau) \big).  
       \label{eq-aoq3}
  \end{eqnarray}
  \end{lemma}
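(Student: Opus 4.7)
The plan is to mirror the pattern of Lemma~\ref{le-er1}, with ``maximal hyperedges and subset closure'' replaced by ``minimal hyperedges and superset closure''. The key combinatorial step is to identify the fiber of $\bar\Delta$: I will show that $\bar\Delta\mathcal{H}=\mathcal{L}$ if and only if $\min(\mathcal{L})\subseteq\mathcal{H}\subseteq\mathcal{L}$. Granting this, the formula will follow by a routine splitting of the product defining $\bar{\rm P}_p(\mathcal{H})$.

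For the fiber characterization, I would unpack $\bar\Delta\mathcal{H}=\{\tau\in\Delta[V]\mid\tau\supseteq\sigma\text{ for some }\sigma\in\mathcal{H}\}$. Forward direction: if $\bar\Delta\mathcal{H}=\mathcal{L}$, then $\mathcal{H}\subseteq\bar\Delta\mathcal{H}=\mathcal{L}$, and every $\sigma\in\min(\mathcal{L})\subseteq\bar\Delta\mathcal{H}$ admits some $\tau\in\mathcal{H}\subseteq\mathcal{L}$ with $\tau\subseteq\sigma$; minimality of $\sigma$ in $\mathcal{L}$ then forces $\tau=\sigma$, so $\sigma\in\mathcal{H}$. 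Reverse direction: if $\min(\mathcal{L})\subseteq\mathcal{H}\subseteq\mathcal{L}$, then $\bar\Delta\mathcal{H}\subseteq\bar\Delta\mathcal{L}=\mathcal{L}$ (as $\mathcal{L}$ is already an independence hypergraph), and any $\tau\in\mathcal{L}$ contains some $\sigma\in\min(\mathcal{L})\subseteq\mathcal{H}$, placing $\tau$ in $\bar\Delta\sigma\subseteq\bar\Delta\mathcal{H}$.

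With the fiber pinned down, I would partition $\Delta[V]$ into the three disjoint pieces $\min(\mathcal{L})$, $\mathcal{L}\setminus\min(\mathcal{L})$, and $\Delta[V]\setminus\mathcal{L}$ and split the product in $\bar{\rm P}_p(\mathcal{H})$ accordingly. Hyperedges in $\min(\mathcal{L})$ are forced into $\mathcal{H}$ and contribute $p(\sigma)$; hyperedges in $\Delta[V]\setminus\mathcal{L}$ are forbidden from $\mathcal{H}$ and contribute $1-p(\sigma)$; hyperedges in $\mathcal{L}\setminus\min(\mathcal{L})$ are free, and summing over their $2^{|\mathcal{L}\setminus\min(\mathcal{L})|}$ configurations telescopes via
\begin{equation*}
\sum_{\mathcal{H}'\subseteq\mathcal{L}\setminus\min(\mathcal{L})}\prod_{\sigma\in\mathcal{H}'}p(\sigma)\prod_{\sigma\in\mathcal{L}\setminus\min(\mathcal{L}),\,\sigma\notin\mathcal{H}'}\bigl(1-p(\sigma)\bigr)=\prod_{\sigma\in\mathcal{L}\setminus\min(\mathcal{L})}\bigl(p(\sigma)+(1-p(\sigma))\bigr)=1.
\end{equation*}
The remaining forced factors are precisely the right-hand side of (\ref{eq-aoq3}). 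The only genuinely delicate point along the way is recognizing minimality in $\mathcal{L}$ as the exact dual to maximality in $\mathcal{K}$ for the $\Delta$-version; everything after the fiber characterization is a routine factorization.
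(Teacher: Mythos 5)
Your proof is correct and follows essentially the same route as the paper: both identify the fiber of $\bar\Delta$ over $\mathcal{L}$ as exactly the hypergraphs $\mathcal{H}$ with $\min(\mathcal{L})\subseteq\mathcal{H}\subseteq\mathcal{L}$ (the paper writes these as $\min(\mathcal{L})\sqcup S$ with $S\subseteq\mathcal{L}\setminus\min(\mathcal{L})$) and then collapse the sum over the free hyperedges via the telescoping factor $p(\sigma)+(1-p(\sigma))=1$. Your write-up actually justifies the fiber characterization in both directions, which the paper only asserts.
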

  \begin{proof}
  Let  $\mathcal{L}$  be an  independence  hypergraph on   $V$.
   Let $S=\{\sigma_1,\sigma_2,\ldots, \sigma_s\}$  be any set (allowed to be the emptyset) of distinct hyperedges in  $\mathcal{L}$ such that for each $\sigma_i$,  $i=1,2,\ldots,s$,  there exists  $\tau\in\min(\mathcal{L})$  such  that $\sigma_i\supsetneq \tau$.  Suppose  $S$  runs  over  all  such sets
   of  hyperedges  in $\mathcal{L}$.  Then  $\mathcal{H}=\min(\mathcal{L})\sqcup S$  runs over  all the   hypergraphs
    on  $V$  such that  $\bar\Delta\mathcal{H}=\mathcal{L}$.
    Consequently,  
        \begin{eqnarray}
    \big((D\bar\Delta)(\bar{\rm P}_{p})\big)(\mathcal{L})&=& \sum_{\bar\Delta \mathcal{H}=\mathcal{L}}~\bar{\rm P}_{p}(\mathcal{H})
    \nonumber\\
    &=& \sum_{\bar\Delta \mathcal{H}=\mathcal{L}}~\prod_{\sigma\in\mathcal{H}}  p(\sigma)\prod_{\sigma\notin   \mathcal{H}}\big(1-p(\sigma)\big)\nonumber\\
    &=& \sum_{\mathcal{H}=\min(\mathcal{L})\sqcup S}~\prod_{\sigma\in\mathcal{H}}  p(\sigma)\prod_{\sigma\in\Delta[V]  \atop\sigma\notin   \mathcal{H}}\big(1-p(\sigma)\big)
    \nonumber\\
    &=& \prod_{\sigma\in\min(\mathcal{L})}  p(\sigma)
     \prod_{\sigma\in\Delta[V]  \atop\sigma\notin   \mathcal{L}}\big(1-p(\sigma)\big)
    \nonumber\\
&&      \Big(\sum_{S\subseteq  \mathcal{L}\setminus\min(\mathcal{L})}~
    \prod_{\sigma\in  S} p(\sigma)\prod_{\sigma\in \mathcal{L}\setminus \min(\mathcal{L})\atop \sigma\notin S}
    \big(1-p(\sigma)\big)\Big)   \nonumber\\
     &=& \prod_{\sigma\in\min(\mathcal{L})}  p(\sigma)
     \prod_{\sigma\in\Delta[V]  \atop\sigma\notin   \mathcal{L}}\big(1-p(\sigma)\big)\nonumber\\
     &&   \prod_{\sigma\in \mathcal{L}\setminus\min(\mathcal{L})} \Big(p(\sigma)+\big(1-p(\sigma)\big)\Big) \nonumber\\
    &=& \prod_{\tau\in\min(\mathcal{L})} p(\tau)   \prod_{\tau\in\Delta[V] \atop \tau\notin \mathcal{L}} \big(1-p(\tau) \big).
    \label{eq-lpgwq88}
    \end{eqnarray}
   We  obtain  (\ref{eq-aoq3}).
  \end{proof}

\begin{lemma}
\label{le-jdngt}
   For  any   simplicial  complex   $\mathcal{K}$  on  $V$,
\begin{eqnarray*}
 \big((D\delta)(\bar{\rm P}_{p})\big)(\mathcal{K})
         =  \prod_{\tau\in \min (\gamma\mathcal{K})} \big(1-p(\tau)\big)\prod_{\tau\in\mathcal{K}}  p(\tau).  
\end{eqnarray*}
\end{lemma}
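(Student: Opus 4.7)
The plan is to expand the definition and parameterize the hypergraphs $\mathcal{H}$ with $\delta\mathcal{H}=\mathcal{K}$, in close analogy with the argument used in Lemma~\ref{le-er2}. By definition,
\begin{eqnarray*}
\big((D\delta)(\bar{\rm P}_p)\big)(\mathcal{K})=\sum_{\delta\mathcal{H}=\mathcal{K}}\bar{\rm P}_p(\mathcal{H})=\sum_{\delta\mathcal{H}=\mathcal{K}}\prod_{\sigma\in\mathcal{H}}p(\sigma)\prod_{\sigma\in\Delta[V]\atop\sigma\notin\mathcal{H}}\bigl(1-p(\sigma)\bigr),
\end{eqnarray*}
so the task reduces to understanding the fiber $\{\mathcal{H}:\delta\mathcal{H}=\mathcal{K}\}$ and summing the product weights over this fiber.

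First I would characterize this fiber. Since $\delta\mathcal{H}=\{\sigma\in\Delta[V]\mid\Delta\sigma\subseteq\mathcal{H}\}$, the equation $\delta\mathcal{H}=\mathcal{K}$ splits into two conditions: (a) for every $\sigma\in\mathcal{K}$, every nonempty subset of $\sigma$ lies in $\mathcal{H}$ (which, since $\mathcal{K}$ is simplicial, is equivalent to $\mathcal{K}\subseteq\mathcal{H}$); and (b) for every $\sigma\in\gamma\mathcal{K}$, some nonempty subset $\tau\subseteq\sigma$ is missing from $\mathcal{H}$. Granted (a), such a missing $\tau$ must lie in $\gamma\mathcal{K}$. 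The key observation I would establish is that (b) is equivalent to requiring $\mathcal{H}\cap\min(\gamma\mathcal{K})=\emptyset$. Indeed, if every minimal element of $\gamma\mathcal{K}$ is outside $\mathcal{H}$, then for an arbitrary $\sigma\in\gamma\mathcal{K}$ we can find a minimal $\tau\in\min(\gamma\mathcal{K})$ with $\tau\subseteq\sigma$, witnessing (b); conversely, if $\sigma\in\min(\gamma\mathcal{K})$ itself, the minimality forces the missing $\tau$ to equal $\sigma$.

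Thus the hypergraphs $\mathcal{H}$ contributing to the sum are exactly those of the form $\mathcal{H}=\mathcal{K}\sqcup S$ with $S\subseteq\gamma\mathcal{K}\setminus\min(\gamma\mathcal{K})$. I would then split the product and factor out the deterministic pieces coming from $\mathcal{K}$ and $\min(\gamma\mathcal{K})$, obtaining
\begin{eqnarray*}
\big((D\delta)(\bar{\rm P}_p)\big)(\mathcal{K})=\prod_{\tau\in\mathcal{K}}p(\tau)\prod_{\tau\in\min(\gamma\mathcal{K})}\bigl(1-p(\tau)\bigr)\sum_{S\subseteq\gamma\mathcal{K}\setminus\min(\gamma\mathcal{K})}\prod_{\sigma\in S}p(\sigma)\prod_{\sigma\in(\gamma\mathcal{K}\setminus\min(\gamma\mathcal{K}))\setminus S}\bigl(1-p(\sigma)\bigr).
\end{eqnarray*}
The inner sum over subsets $S$ collapses to $\prod_{\sigma\in\gamma\mathcal{K}\setminus\min(\gamma\mathcal{K})}\bigl(p(\sigma)+(1-p(\sigma))\bigr)=1$, exactly as in the last step of the computation (\ref{eq-lpgwq88}) in Lemma~\ref{le-er2}. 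This yields the claimed formula.

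The only subtle step is the characterization of the fiber $\{\mathcal{H}:\delta\mathcal{H}=\mathcal{K}\}$ in terms of $\min(\gamma\mathcal{K})$; once this is in place, the combinatorial summation is routine and parallel to the proof of Lemma~\ref{le-er2}. I expect the equivalence between condition (b) and disjointness from $\min(\gamma\mathcal{K})$ to be the main conceptual point to verify carefully.
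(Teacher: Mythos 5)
Your proof is correct, and it takes a slightly different route from the paper's. The paper proves Lemma~\ref{le-jdngt} by conjugating with the complement operator: it writes $\delta=\gamma\bar\Delta\gamma$ (Lemma~\ref{le-882890}), unwinds $(D\gamma)(D\bar\Delta)(D\gamma)$ as a chain of sums to get $\sum_{\bar\Delta\mathcal{H}=\gamma\mathcal{K}}\bar{\rm P}_p(\gamma\mathcal{H})$, and then appeals to a calculation ``analogous to (\ref{eq-lpgwq88})'' from Lemma~\ref{le-er2}. You instead compute the fiber of $\delta$ over $\mathcal{K}$ directly, showing $\delta\mathcal{H}=\mathcal{K}$ iff $\mathcal{H}=\mathcal{K}\sqcup S$ with $S\subseteq\gamma\mathcal{K}\setminus\min(\gamma\mathcal{K})$, and then run the telescoping sum $\prod\bigl(p(\sigma)+(1-p(\sigma))\bigr)=1$. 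The two fibers correspond to each other under $\gamma$ (your $\mathcal{K}\sqcup S'$ is the complement of the paper's $\min(\gamma\mathcal{K})\sqcup S$ with $S'=(\gamma\mathcal{K}\setminus\min(\gamma\mathcal{K}))\setminus S$), so the combinatorial content is the same; what your version buys is a self-contained argument that makes explicit the step the paper leaves as ``analogous,'' at the cost of redoing the fiber analysis rather than reusing Lemma~\ref{le-er2} wholesale. Your identification of the key point is also the right one: the equivalence of condition (b) with $\mathcal{H}\cap\min(\gamma\mathcal{K})=\emptyset$ is exactly where the finiteness of $V$ (so that every $\sigma\in\gamma\mathcal{K}$ contains a minimal element of $\gamma\mathcal{K}$) and the minimality argument are used, and both directions as you sketch them are sound.
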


\begin{proof}
By  a  direct  calculation,   
\begin{eqnarray*}
\big((D\delta)(\bar{\rm P}_{p})\big)(\mathcal{K})&=&  \big((D\gamma)\circ (D\bar\Delta)\circ (D\gamma)(\bar{\rm P}_{p})\big)(\mathcal{K})\\
&=&\sum_{\gamma \mathcal{L}=\mathcal{K}} (D\bar\Delta\circ D\gamma)(\bar{\rm P}_p)(\mathcal{L})\\
&=&\sum_{\gamma \mathcal{L}=\mathcal{K}} ~
\sum_{\bar\Delta \mathcal{H}=\mathcal{L}}( D\gamma)(\bar{\rm P}_p)(\mathcal{H})\\
&=&\sum_{\gamma \mathcal{L}=\mathcal{K}} ~
\sum_{\bar\Delta \mathcal{H}=\mathcal{L}}~
\sum_{\gamma\mathcal{H}'=\mathcal{H}} \bar{\rm P}_p(\mathcal{H}')\\
&=&\sum_{\bar \Delta \mathcal{H}=\gamma\mathcal{K}} \bar{\rm  P}_p (\gamma\mathcal{H})\\
&=& \prod_{\tau\in\min(\gamma\mathcal{K})} \big(1-p(\tau)\big)   \prod_{\tau\in \mathcal{K}}  p(\tau).
\end{eqnarray*}
The  last equality follows by a analogous calculation of (\ref{eq-lpgwq88}). 
\end{proof}

   \begin{lemma}\label{co-i3ro}
   For  any   independence  hypergraph  $\mathcal{L}$  on  $V$,
   \begin{eqnarray*}
    \big((D\bar\delta)(\bar{\rm P}_{p})\big)(\mathcal{L})
        =\prod_{\tau\in \max (\gamma\mathcal{L})} \big (1-p(\tau)\big)\prod_{\tau\in\mathcal{L}}  p(\tau).  
   \end{eqnarray*}
   \end{lemma}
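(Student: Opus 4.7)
The plan is to exploit the conjugation identity $\bar\delta = \gamma\Delta\gamma$ from Lemma~\ref{le-882890} and reduce the computation of $(D\bar\delta)(\bar{\rm P}_p)$ at an independence hypergraph $\mathcal{L}$ to the already-established computation of $(D\Delta)(\bar{\rm P}_{1-p})$ at the simplicial complex $\gamma\mathcal{L}$. This parallels the proof of Lemma~\ref{le-jdngt}, which handled the dual statement for $D\delta$ via $\delta = \gamma\bar\Delta\gamma$; here the roles of $\Delta$ and $\bar\Delta$ are swapped throughout.

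First I would observe that the definition of $Df$ makes $D$ functorial, that is, $D(f\circ g) = (Df)\circ(Dg)$, by unraveling the double sum over fibers. Hence $D\bar\delta = (D\gamma)\circ(D\Delta)\circ(D\gamma)$. Evaluating this at $\bar{\rm P}_p$ and at $\mathcal{L}$, and using that $\gamma$ is an involutive bijection on $\mathbf{Obj}(\mathbf{H}(V))$ so that each $D\gamma$-sum collapses to a single term, I expect to arrive at the compact form
\begin{eqnarray*}
((D\bar\delta)(\bar{\rm P}_p))(\mathcal{L}) = \sum_{\Delta\mathcal{H} = \gamma\mathcal{L}} \bar{\rm P}_p(\gamma\mathcal{H}),
\end{eqnarray*}
exactly analogous to the fifth displayed line of the proof of Lemma~\ref{le-jdngt}.

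Next I would rewrite $\bar{\rm P}_p(\gamma\mathcal{H}) = \bar{\rm P}_{1-p}(\mathcal{H})$ by Lemma~\ref{cplmt}, turning the display into $((D\Delta)(\bar{\rm P}_{1-p}))(\gamma\mathcal{L})$. Since $\gamma\mathcal{L}$ is a simplicial complex, Lemma~\ref{le-er1} applies with the probability function $1-p$ in place of $p$ and yields
\begin{eqnarray*}
\prod_{\tau\in\max(\gamma\mathcal{L})} (1-p(\tau)) \prod_{\tau\in\Delta[V],\,\tau\notin\gamma\mathcal{L}} p(\tau).
\end{eqnarray*}
The identity $\tau\notin\gamma\mathcal{L} \Longleftrightarrow \tau\in\mathcal{L}$ rewrites the second product as $\prod_{\tau\in\mathcal{L}} p(\tau)$, which is the claimed formula.

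The only delicate point is bookkeeping: keeping track of the three successive $\gamma$'s, the swap from $p$ to $1-p$ produced by $D\gamma$, and the passage from $\max$ on the simplicial-complex side to the corresponding indexing on the independence-hypergraph side. There is no new combinatorial content beyond Lemmas~\ref{le-882890}, \ref{cplmt}, and~\ref{le-er1}, and the proof of Lemma~\ref{le-jdngt} supplies a template that I would follow line-by-line with $\Delta$ and $\bar\Delta$ interchanged.
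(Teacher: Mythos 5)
Your proposal is correct and follows essentially the same route as the paper: both expand $D\bar\delta=(D\gamma)(D\Delta)(D\gamma)$ via Lemma~\ref{le-882890}, collapse the two $D\gamma$-sums using that $\gamma$ is an involution to reach $\sum_{\Delta\mathcal{H}=\gamma\mathcal{L}}\bar{\rm P}_p(\gamma\mathcal{H})$, and then evaluate that sum as the stated product. The only difference is cosmetic: you justify the final step by substituting $1-p$ into Lemma~\ref{le-er1} (via Lemma~\ref{cplmt}), which is a slightly more explicit bookkeeping of the same computation the paper dismisses as ``an analogous calculation.''
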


\begin{proof}
By  a  direct  calculation,     
\begin{eqnarray*}
  \big((D\bar\delta)(\bar{\rm P}_{p})\big)(\mathcal{L})&=& \big((D\gamma)\circ (D\Delta)\circ (D\gamma)(\bar{\rm P}_{p})\big)(\mathcal{L})\\
&=&\sum_{\gamma \mathcal{K}=\mathcal{L}} (D\Delta\circ D\gamma)(\bar{\rm P}_p)(\mathcal{K})\\
&=&\sum_{\gamma \mathcal{K}=\mathcal{L}} ~
\sum_{\Delta \mathcal{H}=\mathcal{K}}( D\gamma)(\bar{\rm P}_p)(\mathcal{H})\\
&=&\sum_{\gamma \mathcal{K}=\mathcal{L}} ~
\sum_{\Delta \mathcal{H}=\mathcal{K}}~
\sum_{\gamma\mathcal{H}'=\mathcal{H}} \bar{\rm P}_p(\mathcal{H}')\\
&=&\sum_{ \Delta \mathcal{H}=\gamma\mathcal{L}} \bar{\rm  P}_p (\gamma\mathcal{H})\\
&=& \prod_{\tau\in\max(\gamma\mathcal{L})} \big(1-p(\tau)\big)   \prod_{\tau\in \mathcal{L}}  p(\tau).
\end{eqnarray*}
The  last equality follows by a analogous calculation of  \cite[Lemma~4.2]{jktr2}.
\end{proof}

 \begin{proof}[Proof of  Theorem~\ref{le-3.3aa}]
  For any  simplicial  complex  $\mathcal{K}$  on  $V$,  
  \begin{eqnarray*}
  E(\mathcal{K})=\min (\gamma\mathcal{K}).
  \end{eqnarray*}
 For  any  independence  hypergraph  $\mathcal{L}$  on  $V$,   
 \begin{eqnarray*}
 \bar  E(\mathcal{L})=\max (\gamma  \mathcal{L}).
 \end{eqnarray*}
 Therefore,  Theorem~\ref{le-3.3aa}  follows  from  Lemma~\ref{le-er1}   -   Lemma~\ref{co-i3ro}.   
 \end{proof}

\section{Generations       of  random hypergraphs  and   random  simplicial  complexes  by  the  map  algebra}\label{ss8ss88}

\subsection{A   graded    construction  of   the  map  algebra}

Let  ${\bf  H}$   be   the  category  such that  
each   object     is    a   pair   $(V,\mathcal{H})$,   where 
$V$  is  a  finite  set  and  $\mathcal{H}$  is a   hypergraph  on  $V$,  
  and
  each   morphism  from    $(V,\mathcal{H})$   to   $(V',\mathcal{H}')$   is   a   morphism  $f:  V\longrightarrow  V'$   of  hypergraphs    from   $\mathcal{H}$  to $\mathcal{H}'$.   
Let  $G^1$  be  the semi-group  generated  by  $\gamma$,  $\delta$  and  $\Delta$,    where the  unit  element  is  the  identity map and the  multiplication  is  the  composition  of  maps.      
 Each  element  of  $G^1$  is  a    map  from   ${\bf  Obj}({\bf  H})$   to  itself.     
 \begin{proposition}\label{th-ma25}
    Each  of         the  triples
   $\{\gamma,\delta,\Delta\}$,   $\{\gamma,\bar\delta,\bar\Delta\}$,  $\{\gamma,\Delta,\bar\Delta\}$   and      $\{\gamma,\delta,\bar\delta\}$  
    is  a  set  of  multiplicative  generators  of  $G^1$.   
    \end{proposition}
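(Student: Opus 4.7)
The plan is to reduce the claim to Lemma~\ref{le-882890} together with the relation $\gamma^2=\mathrm{id}$ listed just afterwards. By the definition of $G^1$, the triple $\{\gamma,\delta,\Delta\}$ generates $G^1$, so I need to show only that each of the remaining three triples both lies inside $G^1$ and generates $\delta$ and $\Delta$ (hence all of $G^1$) via compositions.

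First I would check containment in $G^1$: from Lemma~\ref{le-882890}, $\bar\Delta=\gamma\delta\gamma$ and $\bar\delta=\gamma\Delta\gamma$ are already compositions of $\gamma,\delta,\Delta$, so the maps $\gamma,\delta,\Delta,\bar\delta,\bar\Delta$ all belong to $G^1$, and in particular so do all four triples. Next, conjugating each identity of Lemma~\ref{le-882890} by $\gamma$ on both sides and using $\gamma^2=\mathrm{id}$ yields the symmetric identities
\begin{equation*}
\delta=\gamma\bar\Delta\gamma,\qquad \Delta=\gamma\bar\delta\gamma.
\end{equation*}

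Equipped with these four identities, I would finish by reading off each case. For $\{\gamma,\bar\delta,\bar\Delta\}$, both $\delta=\gamma\bar\Delta\gamma$ and $\Delta=\gamma\bar\delta\gamma$ are compositions of elements of the triple, so $\{\gamma,\delta,\Delta\}$ lies in the semigroup generated by $\{\gamma,\bar\delta,\bar\Delta\}$. For $\{\gamma,\Delta,\bar\Delta\}$, the map $\Delta$ is already a generator, while $\delta=\gamma\bar\Delta\gamma$ is produced by composition. For $\{\gamma,\delta,\bar\delta\}$, the map $\delta$ is already a generator and $\Delta=\gamma\bar\delta\gamma$ is produced by composition. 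Combined with the containment step, each triple generates the same semigroup, which is $G^1$. There is no genuine obstacle: the entire argument is a symmetric bookkeeping check, leveraging the fact that conjugation by the involution $\gamma$ swaps $\delta\leftrightarrow\bar\Delta$ and $\Delta\leftrightarrow\bar\delta$.
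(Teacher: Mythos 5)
Your proof is correct and follows essentially the same route as the paper: the paper's own argument also invokes Lemma~\ref{le-882890} (together with $\gamma^2=\mathrm{id}$) to observe that each triple multiplicatively generates all five of $\gamma,\Delta,\delta,\bar\Delta,\bar\delta$, hence $G^1$. You merely spell out the conjugation identities $\delta=\gamma\bar\Delta\gamma$ and $\Delta=\gamma\bar\delta\gamma$ that the paper leaves implicit.
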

    
    \begin{proof}
     By   Lemma~\ref{le-882890},  each of  the  triples   $\{\gamma,\Delta,\delta\}$, $\{\gamma,\bar\Delta,\bar\delta\}$,
$\{\gamma, \Delta, \bar\Delta\}$  and  $\{\gamma,\delta, \bar\delta\}$   could  multiplicatively  generate
$\gamma$,  $\Delta$,  $\delta$,  $\bar\Delta$  and $\bar\delta$.   Therefore,  each  of  the triple  could  multiplicatively
generate  $G^1$.   
    \end{proof}
         
    For  any  positive  integer  $k$,   Let  $G^k$   be  the  collection  of  all  the   words 
    \begin{eqnarray*}
     (\cdots (w_1\bullet  w_2)\bullet\cdots   \bullet  w_k),      
    \end{eqnarray*}
    where  $w_1,w_2,\ldots,  w_k\in  G$  and  $\bullet =  \cap,\cup,*$  or  $\square$,    with  the  binary  operation  $\bullet$   for  $k-1$  times and  $k-2$  brackets 
     giving  the  order  of  operations.   For  any  $w\in  G^k$,   
     we  call an  element  $(\cdots,  (V_1,\mathcal{H}_1),  (V_2,\mathcal{H}_2),\cdots )$   in   ${\bf  Obj}({\bf  H})^{\times k }$   {\it  $w$-admissible}  
     if   it  is  sent to  an  element  in ${\bf  Obj}({\bf  H})$  by  $w$,  i.e.    all  the  binary  operations 
     $w_1(\mathcal{H}_1)\bullet  w_2(\mathcal{H}_2)$  in   $w$   are  well-defined.   For  example,  
    any  element  in  ${\bf  Obj}({\bf  H})^{\times 2 }$   is   $\square$-admissible,  a   $*$-admissible  element  in 
    ${\bf  Obj}({\bf  H})^{\times 2 }$   is  of  the  form   $((V_1,\mathcal{H}_1),  (V_2,\mathcal{H}_2))$  where 
    $V_1\cap  V_2=\emptyset$,  
    and   a   $\cap$-admissible  element as  well as   a  $\cup$-admissible  element  in  ${\bf  Obj}({\bf  H})^{\times 2 }$
     is  of  the  form  $( (V,\mathcal{H}_1),  (V,\mathcal{H}_2))$.       
       Note  that 
     $w$   is   a   map  from   the  set  of all  $w$-admissible  elements,    which  is    a  subset  of  ${\bf  Obj}({\bf  H})^{\times k }$,  to  ${\bf  Obj}({\bf  H})$.           
          We  define   the  {\it  map  algebra}    to  be  the     union   
   \begin{eqnarray}\label{eq-4.jnuoh}
    G=  \bigcup_{k\geq 1}  G^k.
    \end{eqnarray}

  Let   $T$   be   a  binary  tree.     Let   $x_0$   be   the  root  of  $T$.  
  We  label   $x_0$   with  $\cap$,  $\cup$,  $*$  or  $\square$   if   $\deg  x_0  =2$   and  label  
  $x_0$  with  $\Delta$,  $\delta$  or   $\gamma$   if   $\deg  x_0=0,1$.      
For  any   vertex  $x$  of   $T$  with  $x\neq  x_0$,        
we  label  $x$  with  $\cap$,  $\cup$,  $*$  or  $\square$   if   $\deg  x   =3$   and  label  $x$  
with   $\Delta$,  $\delta$  or   $\gamma$   if   $\deg  x =1,2$.
 Let   $V_T$   be   the  vertex  set  of  $T$.   Consider   a   map  
 \begin{eqnarray*}
 \alpha:  &&  V_T \longrightarrow  \{\cap,\cup, *, \square,  \Delta,\delta,\gamma\}   
 \end{eqnarray*}
    sending   each  vertex  of   $T$  to  its  label  satisfying     the  above  labeling rule.  
    Let  $k(T)=1$   if   $T$   is  the  empty  binary  tree  or   $T$  has  a  single  vertex   $x_0$   and  let  $k(T)$       be   the  number  of  vertices  $x\neq  x_0$      such  that        $\deg  x  =1$     if   $T$  has  at  least  two  vertices.    
    Then  each   pair   $(T,\alpha)$    represents   an  element  in   $G^{k(T)}$.    
        For  each  nonnegative  integer $k$,  
       each  element  in  $G^k$  has  such  a  representation   $(T,\alpha)$,  while  two  pairs  $(T,\alpha)$  and  $(T',\alpha')$
        may  represent one  element  in  $G^k$.   The  pair  $(\emptyset,  \alpha)$,  where  $\emptyset$  is the  empty  binary  tree,  represents  ${\rm  id}\in  G^1$.   A  pair  $(T, \alpha)$,   where    $T$  is  a  binary  tree  such that  $\deg  x_0=1$  and  $\deg  x=1,  2$  for  any  $x\neq  x_0$,    represents  an  element  in  $G^1$.

       Let  ${\bf  DH}$   be   the  category  such that  
each    object    is  a    triple   $(V,\mathcal{H}, \varphi)$,  where  $V$  is   a  finite  set,   $\mathcal{H}$  is a hypergraph  on  $V$  and  $\varphi\in D({\bf H}(V))$  is a  probability distribution  on  ${\bf  Obj}({\bf H}(V))$,   and   each   morphism  from   $(V,\mathcal{H}, \varphi)$   to  $(V',\mathcal{H}', \varphi')$   
         is    a  morphism  $f:  V\longrightarrow  V'$  of  hypergraphs   from  $\mathcal{H}$  to   $\mathcal{H}'$  satisfying   
         $(Df)(\varphi)=\varphi'$.               
Let  $S$   be  the  subset  of         ${\bf  Obj}({\bf  DH})$  consisting  of  all the  elements   
         $(V,\mathcal{H}, \bar{\rm  P}_p)\in  {\bf  Obj}({\bf  DH})$,  
              where   $p:  \Delta[V]\longrightarrow [0,1]$.  
             For  any  $w_1,w_2\in   G^1$,  define  
         $D(w_1w_2)=(Dw_1)(Dw_2)$.      
         For  each  $w\in  G^1$,  define  
         \begin{eqnarray*}
         w(V,\mathcal{H}, \varphi)=  (w(V,\mathcal{H}),  (Dw) (\varphi)).  
         \end{eqnarray*}
         Then  $w$  is  a map   from   ${\bf  Obj}({\bf  DH})$  to  itself.   
         Let  $G^1(S)$   be  the  union   of  $w(S)$  for  all  $w\in   G_1$,   where  $w(S)$   is  the 
         image  of  $S$  under  $w$.  
         
         \begin{proposition}
         For  any  $p:  \Delta[V]\longrightarrow  [0,1]$,  
         ${\rm   P}_p$   and   and  ${\rm  Q}_p$  are   in  $G^1(S)$.  
         \end{proposition}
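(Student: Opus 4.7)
The plan is to realize ${\rm P}_p$ and ${\rm Q}_p$ as the probability-distribution components of images of $S$ under two specific single-generator elements of $G^1$, namely $\delta$ and $\bar\delta$. By the definition in the paper, for $w\in G^1$ the map sends $(V,\mathcal{H},\varphi)$ to $(w(V,\mathcal{H}),(Dw)(\varphi))$, so to put a probability distribution $\psi$ in $G^1(S)$ it suffices to exhibit some $w\in G^1$ and some $(V,\mathcal{H},\bar{\rm P}_p)\in S$ with $(Dw)(\bar{\rm P}_p)=\psi$.

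For ${\rm P}_p$, I would invoke Theorem~\ref{le-3.3aa}(3), which gives $(D\delta)(\bar{\rm P}_p)=\mathrm{P}_p$. Since $\delta$ is one of the named generators of $G^1$, the triple $\delta(V,\mathcal{H},\bar{\rm P}_p)=(V,\delta\mathcal{H},{\rm P}_p)$ lies in $\delta(S)\subseteq G^1(S)$, so ${\rm P}_p\in G^1(S)$ as required.

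For ${\rm Q}_p$, I would similarly use Theorem~\ref{le-3.3aa}(4), giving $(D\bar\delta)(\bar{\rm P}_p)={\rm Q}_p$. The only extra step here is to verify that $\bar\delta\in G^1$; this is immediate from Lemma~\ref{le-882890}, which expresses $\bar\delta=\gamma\Delta\gamma$ as a word in the generators $\gamma$, $\Delta$. Hence $\bar\delta(V,\mathcal{H},\bar{\rm P}_p)=(V,\bar\delta\mathcal{H},{\rm Q}_p)\in\bar\delta(S)\subseteq G^1(S)$, and ${\rm Q}_p\in G^1(S)$.

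There is no genuine obstacle in this argument, since all of the heavy lifting was done in Theorem~\ref{le-3.3aa} and Lemma~\ref{le-882890}. The only point worth stating explicitly is the mild notational convention that ${\rm P}_p$ and ${\rm Q}_p$ are being viewed as the third coordinates of objects in ${\bf DH}$, rather than as maps in $G^1$ themselves; once this identification is made, the result reduces to reading off two entries of Theorem~\ref{le-3.3aa} and noting that $\delta,\bar\delta\in G^1$.
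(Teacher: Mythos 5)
Your proposal is correct and follows essentially the same route as the paper, which likewise deduces ${\rm P}_p\in G^1(S)$ from Theorem~\ref{le-3.3aa}~(3) and ${\rm Q}_p\in G^1(S)$ from Theorem~\ref{le-3.3aa}~(4). Your explicit check that $\bar\delta=\gamma\Delta\gamma\in G^1$ via Lemma~\ref{le-882890} is a small detail the paper leaves implicit, but it is the same argument.
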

         
         \begin{proof}
         By  Theorem~\ref{th-mmmmm11111}~(4)   or  Theorem~\ref{le-3.3aa}~(3),    
         ${\rm   P}_p\in   G^1(S)$.       By  Theorem~\ref{th-mmmmm11111}~(5)   or  Theorem~\ref{le-3.3aa}~(4),    
         ${\rm   Q}_p\in   G^1(S)$.   
         \end{proof}

         Define   $D\square$   to  be   the  map   from      ${\bf  Obj}({\bf  DH})\times {\bf  Obj}({\bf  DH})$  to  ${\bf  Obj}({\bf  DH})$  by 
         \begin{eqnarray*}
         (D \square)((V,\mathcal{H},\varphi), (V',\mathcal{H}',\varphi'))=(V\times  V',  \mathcal{H}\square \mathcal{H}',  (D\square)(\varphi,\varphi')).      
            \end{eqnarray*}    
         Define       $D*$   to  be   the  map   from   the     collection    of    $*$-admissible  elements   in  ${\bf  Obj}({\bf  DH})\times {\bf  Obj}({\bf  DH})$  to  ${\bf  Obj}({\bf  DH})$  by 
         \begin{eqnarray*}
         (D *)((V,\mathcal{H},\varphi), (V',\mathcal{H}',\varphi'))=(V\sqcup  V',  \mathcal{H} *\mathcal{H}',  (D*)(\varphi,\varphi')),
                  \end{eqnarray*}
where  $V\cap  V'=\emptyset$.  
 Define     $D\cap$   and    $D\cup$  to  be    maps  from   the    collection    of    $\cap$-admissible  (equivalently,   $\cup$-admissible)   elements   in  ${\bf  Obj}({\bf  DH})\times {\bf  Obj}({\bf  DH})$  to  ${\bf  Obj}({\bf  DH})$  by 
         \begin{eqnarray*}
          (D \cap)((V,\mathcal{H},\varphi), (V,\mathcal{H}',\varphi'))&=&(V,  \mathcal{H}\cap \mathcal{H}',  (D\cap)(\varphi,\varphi')),\\
           (D \cup)((V,\mathcal{H},\varphi), (V,\mathcal{H}',\varphi'))&=&(V,  \mathcal{H}\cup \mathcal{H}',  (D\cup)(\varphi,\varphi')). 
         \end{eqnarray*}
        Define   $D(w_1\bullet  w_2)= (D\bullet)  (Dw_1, Dw_2)$  for  $\bullet =  \cap,\cup,*$  or  $\square$.  
         Then   each element  in  $G^{k}$       is   a  map  from    an   admissible  subset  of    ${\bf  Obj}({\bf   DH})^{\times  k}$  to  ${\bf  Obj}({\bf  DH})$.   
         Let 
         \begin{eqnarray*}
         G^k (S^{\times k}) &=& \{w((V_1,\mathcal{H}_1, \bar{\rm  P}_{*}),  \cdots,    (V_k,\mathcal{H}_k, \bar{\rm  P}_{*}))\mid 
         w\in  G^k {\rm~and~}\\
         &&((V_1,\mathcal{H}_1, \bar{\rm  P}_{*}),  \cdots,    (V_k,\mathcal{H}_k, \bar{\rm  P}_{*})) \in  S^{\times  k} ~{\rm~is~}w{\rm\text{-}admissible} \}. 
         \end{eqnarray*}
         Take      the     union  
         \begin{eqnarray}\label{eq-9823u592}
         G(S)= \bigcup_{k\geq  1} G^k (S^{\times k}).    
         \end{eqnarray}
                 Each  element   in    $G(S)$  is  of  the  form    $(V,\mathcal{H},\varphi)$,   where   $V$    is  a  finite   vertex  set,   
            $\mathcal{H}$  is  a  
         hypergraph   on $V$  and  
            $\varphi$   is  a   probability  function  on  ${\bf  Obj}({\bf H}(V))$.    The  probability  function   $\varphi$  is  given  by  the  action  of   certain     $D w$,  where  $w\in  G$,   on   multiple   probability   functions   of  the  form  $\bar  {\rm P}_*$,  ${\rm   P}_*$  and  ${\rm  Q}_*$.

    \subsection{Algorithms  for  the  generations  of   random hypergraphs  and  random  simplicial complexes}
    
    
    \begin{proof}[Algorithm generating  random hypergraphs]
       {\bf  Step~1}.       Choose  a  positive  integer  $k$.  
    Choose  $k$  finite  sets  $V_1$,  $V_2$,  $\ldots$,  $V_{k}$  as  the  vertex  sets  without   mutual  intersections.

    {\bf  Step~2}.  For each  $i=1,2,\ldots,  k$,   choose  a  positive  integer   $n_i$.  
      Use  the   Erd\"os-R\'enyi-type  model
${\rm \bar  P}_p$  to  give  $n_i$  randomly generated   hypergraphs  $\mathcal{H}_{1,i}$,  $\mathcal{H}_{2,i}$,  $\ldots$,  $\mathcal{H}_{n_i,i}$   on  $V_i$.

     {\bf  Step~3}.   For  each  $i=1,2,\ldots,  k$  and  each  $j=1,2,\ldots,  n_i$,  choose  an  element  $w_{j,i}\in  G^1$.   Apply  $w_{j,i}$  to  
      $\mathcal{H}_{j,i}$  and    give  a  randomly  generated  hypergraph  $w_{j,i}(\mathcal{H}_{j,i})$  on   $V_i$.

   {\bf  Step~4}.     For  each  $i=1,2,\ldots,  k$,   
   {\bf  (4.a)}.  Choose  two  randomly  generated hypergraphs  from  $w_{j,i}(\mathcal{H}_{j,i})$,   $j=1$, $2$, $\ldots$,  $n_i$,  and  apply  $\cap$  or  $\cup$  to these  two  randomly  generated  hypergraphs.  
   {\bf  (4.b)}.  Choose  a randomly  generated hypergraph  from the remaining  $n_i-2$  randomly generated  hypergraphs    $w_{j,i}(\mathcal{H}_{j,i})$,   $j=1$, $2$, $\ldots$,  $n_i$,  and  apply  $\cap$  or $\cup$  to  this    chosen   randomly  generated hypergraph  and  the  randomly  generated  hypergraph  given  in  (4.a).       
   {\bf  (4.c)}.  Repeat   (4.b)  for  $n_i-2$  times.   Denote  this  final  randomly  generated  hypergraph  on  $V_i$      as  $\mathcal{H}_i$.

      {\bf  Step~5}.    {\bf  (5.a)}.  Choose  two  randomly  generated hypergraphs  from  $\mathcal{H}_i$,   $i=1$, $2$, $\ldots$,  $k$,   and  apply  $\square$  or  $*$  to these  two randomly generated  hypergraphs.    
    {\bf  (5.b)}.  Choose  a randomly  generated hypergraph  from the remaining  $k-2$  randomly generated  hypergraphs     $ \mathcal{H}_{i}$,   $i=1$, $2$, $\ldots$,  $k$,  and  apply  $\square$  or $*$  to  this    chosen   randomly  generated hypergraph  and  the  randomly  generated  hypergraph  given  in  (5.a).   
    {\bf  (5.c)}.  Repeat   (5.b)  for  $k-2$  times.   This  gives  a   randomly  generated  hypergraph  $\mathcal{H}$.

          {\bf  Step~6}.    Choose  $w\in  G^1$.   Apply  $w$  to  $\mathcal{H}$.   The  final  randomly   generated  
          hypergraph  is  $w(\mathcal{H})$.  
    \end{proof}

Some  algorithms   that  generate     random  simplicial complexes  and  random  independence  hypergraphs  follow  immediately.

            \begin{proof}[Algorithm generating  random simplicial complexes]
     {\bf  Step~1}.         Same  as   Step~1  in    the  algorithm generating  random hypergraphs.

    {\bf  Step~2}.  For each  $i=1,2,\ldots,  k$,   choose  a  positive  integer   $n_i$.  
      Use  the     algorithm generating  random hypergraphs   to  give  $n_i$  randomly generated   hypergraphs  $\mathcal{H}_{1,i}$,  $\mathcal{H}_{2,i}$,  $\ldots$,  $\mathcal{H}_{n_i,i}$   on  $V_i$.

    {\bf  Step~3}.  For each  $i=1,2,\ldots,  k$  and  each  $j=1,2,\ldots,  n_i$,    apply  $\Delta$  or  $\delta$ 
    to  $\mathcal{H}_{j,i}$.  This  gives  a  randomly  generated  simplicial  complex  $\mathcal{K}_{j,i}$  on  $V_i$.

    {\bf  Step~4}.   For each  $i=1,2,\ldots,  k$,    similar  to  Step~4  of  the  previous algorithm,  
    apply   the binary operations  $\cap$  and   $\cup$   to  $\mathcal{K}_{1,i}$,  $\mathcal{K}_{2,i}$,   $\ldots$,  
    $\mathcal{K}_{n_i,i}$.    After $n_i-1$ times  of  the  binary  operations,     we  obtain   a  randomly  generated  simplicial  complex  $\mathcal{K}_i$  on  $V_i$.

             {\bf  Step~5}.   Similar  to  Step~5  of  the  previous algorithm,  apply  the  binary  operations  $*$,  $\Delta\square$  and  
             $\delta\square$  to  $\mathcal{K}_1$,  $\mathcal{K}_2$,  $\ldots$,  $\mathcal{K}_k$.  
              After $k-1$ times  of  the  binary  operations,     we  obtain   a  randomly  generated  simplicial  complex  $\mathcal{K}$.               
\end{proof}

 \begin{proof}[Algorithm generating  random independence  hypergraphs]
     {\bf  Steps~1-2}.  Same  as      Steps~1-2   in  the    algorithm generating  random  simplicial complexes.

    {\bf  Step~3}.  For each  $i=1,2,\ldots,  k$  and  each  $j=1,2,\ldots,  n_i$,    apply  $\bar\Delta$  or  $\bar\delta$ 
    to  $\mathcal{H}_{j,i}$ with respect to  $V_i$.  This  gives  a  randomly  generated  independence  hypergraph   $\mathcal{L}_{j,i}$  on  $V_i$.

    {\bf  Step~4}.   For each  $i=1,2,\ldots,  k$,    similar  to  Step~4  of  the  previous algorithms,  
    apply   the binary operations  $\cap$  and   $\cup$   to  $\mathcal{L}_{1,i}$,  $\mathcal{L}_{2,i}$,   $\ldots$,  
    $\mathcal{L}_{n_i,i}$.    After $n_i-1$ times  of  the  binary  operations,     we  obtain   a  randomly  generated  independence  hypergraph  $\mathcal{L}_i$  on  $V_i$.

             {\bf  Step~5}.   Similar  to  Step~5  of  the  previous algorithms,  apply  the  binary  operations  $*$,  $\bar\Delta\square$  and  
             $\bar\delta\square$  to  $\mathcal{L}_1$,  $\mathcal{L}_2$,  $\ldots$,  $\mathcal{L}_k$.  
              After $k-1$ times  of  the  binary  operations,     we  obtain   a  randomly  generated  independence  hypergraph  $\mathcal{L}$.               
\end{proof}

\smallskip

\noindent {{\bf{Acknowledgement}}. The present author would like to express his deep gratitude to  the  editor  and  the referee   
for their kind helps.  }

    {\footnotesize

    \bigskip

Shiquan Ren

Address:
School  of  Mathematics and Statistics,  Henan University,  Kaifeng   475004,  China.

e-mail:  renshiquan@henu.edu.cn

    

    }


\begin{thebibliography}{99}








 
\bibitem{mat1}
Ron Aharoni and Eli Berger,  \emph{The   intersection  of  a  matroid  and  a  simplicial  complex},
Trans.  Amer.  Math. Soc.  {\bf  358}(11)    (2006),  4895-4917.

\bibitem{naheed}
Naheed Anjum Arafat,  Debabrota Basu,  Laurent Decreusefond and  St\'ephane Bressan, \emph {Construction and  random  generation  of
hypergraphs  with  prescribed  degree and  dimension  sequences}, Database and Expert Systems Applications: 31st International Conference,  Slovakia,  2020, Proceedings, Part II.

\bibitem{yd-4}
L. Aronshtam and N. Linial,  \emph{The threshold for $d$-collapsibility in random complexes},  Random Struct. Algor.  {\bf 48} (2016), 260-269.


\bibitem{8}
Lior Aronshtam, Nathan Linial, Tomasz Luczak and Roy Meshulam,  \emph{Collapsibility and vanishing of top homology in random simplicial complexes},   
 Discrete Comput. Geom.  {\bf{49}}(2) (2013), 317-334.


\bibitem{6}
Eric Babson,   Christopher  Hoffman  and  Matthew  Kahle,    \emph {The fundamental group of random $2$-complexes},  {J. Amer. Math. Soc.} {\bf 24}(1)  (2010), 1-28.


\bibitem{berge}
 Claude Berge,   Graphs and hypergraphs,  North-Holland Mathematical Library, Amsterdam, 1973.



\bibitem{bfchen}
Beifang Chen,   Shing-Tung  Yau  and   Yeong-Nan  Yeh,  \emph{Graph homotopy and Graham homotopy},    Discrete  Math.  {\bf  241}  (2001),   153-170. 

\bibitem{ccn}
Philip S. Chodrow,   \emph{Configuration models of  random  hypergraphs},  J.  Complex  Netw. {\bf 8}(3)  (2020),  doi: 10.1093/comnet/cnaa018.

\bibitem{h1}
Stephane Bressan, Jingyan Li, Shiquan Ren   and  Jie Wu,  \emph{The  embedded homology of hypergraphs and applications},     Asian J.    Math. {\bf 23}(3)   (2019),  479-500.






 

\bibitem{y2-4}
D. Cohen, A. Costa, M. Farber and T. Kappeler, \emph{Topology of random $2$-complexes},  {Discrete Comput. Geom. } {\bf 47} (2012), 117-149.

\bibitem{y2-5}
D. Cohen, A. Costa, M. Farber and T.  Kappeler, \emph{Correction to Topology of random $2$-complexes}. {Discrete Comput. Geom. } {\bf 56} (2016), 502-503.


\bibitem{y2-1}
A.E. Costa and M. Farber, \emph{The asphericity of   random $2$-dimensional complexes}, {Random Struct. Algor.} {\bf 46} (2015), 261-273.

\bibitem{y2-2}
A.E. Costa and M. Farber, \emph{Geometry and topology of random $2$-complexes}, {Israel J. Math.} {\bf 209} (2015), 883-927.

\bibitem{cfh}
Armindo Costa,   Michael Farber   and  Danijela Horak,  \emph{Fundamental groups of clique complexes of random graphs}, Trans. London Math. Soc. {\bf 2}(1)  (2015), 1-32.

\bibitem{m88}
A. Costa and M. Farber,  \emph{Homological  domination in  large  random
simplicial  complexes},  IMA Conference on Mathematics of Robotics
 2015,  University of Oxford.

\bibitem{m1}
A. Costa and M. Farber, \emph{Large random simplicial complexes, I}, J. Topol. Anal. {\bf 8}(3)  (2016), 399-429.

\bibitem{m2}
A. Costa and M. Farber, \emph{Large random simplicial complexes, II; the fundamental group}, J. Topol. Anal. {\bf 9}(3)  (2017), 441-483.


\bibitem{m3}
A. Costa and M. Farber, \emph{Large random simplicial complexes, III; the critical dimension}, J. Knot Theory Ramif.  {\bf 26}(2) (2017), 1740010-1 - 1740010-26.

\bibitem{m4}
A. Costa and M. Farber, \emph{Random simplicial complexes}, Configuration Spaces  129-153, Springer INdAM Series {\bf  14} (2016), Springer, 129-153.




\bibitem{1959er}
P. Erd\"os and A. R\'enyi, \emph{On random graphs I}, {Publ. Math. Debrecen} {\bf 6} (1959), 290-297.

\bibitem{1960er}
P. Erd\"os and A. R\'enyi, \emph{On the evolution of random graphs}, {Publ. Math. Inst. Hungar. Acad. Sci.} {\bf 5} (1960), 17-61.





\bibitem{f2021}
Michael Farber  and  Tahl Nowik,   \emph{Topological embeddings into random 2-complexes},  Random Struct. Algor.  {\bf 58}(4)   (2021),   664-675.  




\bibitem{1959g}
E.N. Gilbert, \emph{Random graphs}, {Ann. Math. Statist. } {\bf 30}(4)  (1959), 1141-1144.


 


\bibitem{yd-2}
Anna Gundert and Uli Wagner,   \emph{On eigenvalues of random complexes}, {Israel J. Math.} {\bf 216} (2016), 545-582.

\bibitem{yd-3}
Anna Gundert and Uli Wagner,  \emph{On topological minors in random simplicial complexes}, {Proc. Amer. Math. Soc.} {\bf 144} (2016), 1815-1828.

 
\bibitem{hatcher}
Allen  Hatcher,  Algebraic topology. Cambridge University Press,  2002.



\bibitem{9}
Christopher Hoffman, Matthew Kahle  and Elliot Paquette,  \emph{The threshold for integer homology in random $d$-complexes}, {Discrete Comput. Geom.} {\bf 57} (2017), 810-823.


\bibitem{clique}
Matthew Kahle, \emph{Topology of random clique complexes}, Discrete Math. {\bf 309}(6) (2009), 1658-1671.

\bibitem{contem}
Matthew Kahle, \emph{Topology of random simplicial complexes: a survey}, Algebraic Topology: Applications and New Directions, Contem. Math. {\bf 620} (2014), 201-221.


\bibitem{kahleann}
Matthew Kahle,  \emph{Sharp  vanishing  thresholds  for  cohomology  of  random
 flag  complexes},   Ann.  Math.  {\bf  179}  (2014),  1085-1107.


\bibitem{yd-1}
Matthew Kahle  and   Boris G. Pittel, \emph{Inside the critical window for cohomology of random $k$-complexes}, {Random Struct. Algor.} {\bf 48} (2016), 102-104.




\bibitem{7}
Dmitry N. Kozlov, \emph{The threshold function for vanishing of the top homology group of random $d$-complexes},    {Proc. Amer. Math. Soc. } {\bf 138}(12)  (2010), 4517-4527.


\bibitem{y2-3}
Nathan Linial   and   Roy Meshulam, \emph{Homological connectivity of random $2$-complexes}, {Combinatorica} {\bf 26} (2006), 475-487.

\bibitem{annals}
Nathan Linial  and  Yuval Peled, \emph{On the phase transition in random simplicial complexes}, {Ann. Math.} {\bf 184} (2016), 745-773.






\bibitem{yd-5}
R. Meshulam and N. Wallach, \emph{Homological connectivity of random $k$-dimensional complexes}, {Random Struct. Algor.} {\bf 34} (2009), 408-417.


\bibitem{eat}
James R. Munkres,  Elements of algebraic topology,  Addison-Wesley Publishing Company, California, 1984.


  \bibitem{parks}
  A.D. Parks  and S.L. Lipscomb, \emph{Homology and hypergraph acyclicity: a combinatorial invariant for hypergraphs},   Naval Surface Warfare Center, 1991.




 


 \bibitem{jktr2}
Shiquan   Ren,  Chengyuan Wu  and  Jie   Wu,  \emph{ Maps on random hypergraphs and random simplicial complexes},  J. Knot Theory Ramif.   {\bf 31}(3)  (2022), no.  2250015.



\bibitem{ph}
Shiquan  Ren and  Jie  Wu,  \emph{The stability of  persistence homology for hypergraphs},  preprint.

 

  \end{thebibliography}
 \end{document}